\renewenvironment{abstract}{\begin{quote}\textbf{\abstractname.}}{\end{quote}}
\newtheorem{introtheorem}{Theorem}
\newtheorem{thm}{Theorem}[section]
\newtheorem{theorem}[thm]{Theorem}
\newtheorem{lemma}[thm]{Lemma}
\newtheorem{proposition}[thm]{Proposition}
\newtheorem{corollary}[thm]{Corollary}
\newcommand\mkthm[2]{\newenvironment{#1}{\begin{#2}\rm}{\end{#2}}}
\newenvironment{proof}[1][Proof]{\trivlist\item[\hskip\labelsep{\textit{#1.}}]}{\hspace*{\fill}$\Box$\endtrivlist}
\renewcommand\ge{\geqslant}  
\renewcommand\le{\leqslant}  
\renewcommand\P{\mathbb P}
\newcommand\C{\mathbb C}
\newcommand\Q{\mathbb Q}
\newcommand\Z{\mathbb Z}
\newcommand\N{\mathbb N}
\newcommand\CC{{\mathcal{C}}}
\newcommand\T{\mathbf T}
\newcommand\be[1][@{\;}r@{\;}c@{\;}l@{\;}l@{\;}]{$$\everymath{\displaystyle}\renewcommand\arraystretch{1.2}\begin{array}{#1}}
\newcommand\ee{\end{array}$$}
\newcommand\generated[1]{\left\langle#1\right\rangle}
\newcommand\compact{\itemsep=0cm \parskip=0cm}
\newcommand\set[1]{\left\{#1\right\}}
\newcommand\linequiv{\equiv_{\rm lin}}
\newcommand\tol{\mathop{\longrightarrow}\limits}
\newcommand\matr[1]{\left(\begin{array}{*{20}{c}} #1 \end{array}\right)}
\newcommand\inverse{^{\smash-\mkern-1mu1}}
\newcommand\eqnref[1]{(\ref{#1})}
\newcommand\grant[1]{{\renewcommand\thefootnote{}\footnotetext{#1.}}}
\newcommand\eps{\varepsilon}
\newcommand\smallmatr[2][*{20}{c}]{{\fontsize{8}{9}\arraycolsep=4pt\selectfont\left(\!\!\begin{array}{#1}#2\end{array}\!\!\right)}}
\def\nmat#1 #2 #3 #4 #5 #6 #7 #8 #9 {\smallmatr{
   #1 & #2 & #3 \\
   #4 & #5 & #6 \\
   #7 & #8 & #9
   }}
\newcommand\newop[2]{\newcommand#1{\mathop{\rm #2}\nolimits}}
\newcommand\renewop[2]{\renewcommand#1{\mathop{\rm #2}\nolimits}}
\newop\Bl{Bl}
\newop\Pic{Pic}
\newop\Fix{Fix}
\newop\PGL{PGL}
\renewop\Re{Re}
\renewop\Im{Im}
\newop\br{branch}
\newcommand\address[1]{{\renewcommand\\{,}\par\medskip #1}}
\newcommand\email[2][]{{\par \textit{E-mail address:} \texttt{#2}}}
\begin{document}

\title{The Halphen cubics of order two}
\author{Thomas Bauer, Brian Harbourne, Joaquim Ro\'e, Tomasz Szemberg}
\date{\today}
\maketitle

\thispagestyle{empty}

\grant{\ \\\noindent{\it Acknowledgements}: The understanding necessary for this paper grew out of discussions that occurred in
working groups at two workshops in 2015, namely ``Recent advances in
Linear series and Newton-Okounkov bodies", February 9--14 at the University of Padua, and
``Ideals of Linear Subspaces, Their Symbolic Powers and Waring Problems", February 15--21 at the
Mathematisches Forschungsinstitut Oberwolfach.
We thank the participants in these workshops and the institutions that hosted them.
We also thank the University of Freiburg for hosting visits by Bauer and Harbourne to work
on this paper with Szemberg in person for a week in the summer of 2015. In addition,
the research of Bauer was partially supported by DFG grant BA 1559/6-1,
the research of Ro\'e was partially supported by MTM 2013-40680-P (Spanish MICINN grant)
and 2014 SGR 114 (Catalan AGAUR grant), and the research of Szemberg
was partially supported by National Science Centre, Poland, grant 2014/15/B/ST1/02197\\
\noindent{\it Keywords}: Automorphisms, abelian surfaces, Halphen cubics, Hesse arrangement, rational surfaces\\
\noindent{\it Mathematics Subject Classification (2010)}:
14C20, 
14E05, 
14H52, 
14J26, 
14K12} 

\begin{abstract}
For each $m\ge 1$, Roulleau and Urz\'ua give an
implicit construction of a configuration of $4(3m^2-1)$ complex plane cubic curves.
This construction was crucial for their work on surfaces of general type.
We make this construction explicit by proving that the Roulleau-Urz\'ua configuration
consists precisely of the Halphen cubics of order $m$,
and we determine specific equations of the cubics for $m=1$
(which were known) and for $m=2$ (which are new).
\end{abstract}


\section*{Introduction}

   For each $n=3m$, we study certain arrangements of $\frac 43(n^2-3)$ plane cubic curves;
   each curve is isomorphic to the Fermat cubic $x^3+y^3+z^3$ (i.e., to $T=\C/\Z[\zeta]$, $\zeta=e^{2\pi i/6}$).
   For $n=3$, the eight curves in the arrangement
were known from invariant theory \cite{ADPHR93} and
give the ``inscribed and circumscribed'' cubics
\cite[Proposition 5.2]{ArtebaniDolgachev09} for the four singular cubics
in the Hesse pencil $\langle x^3+y^3+z^3, xyz\rangle$, but for $m>1$ the arrangements are 
described explicitly here for the first time.

   These arrangements come from arrangements of $4n^2$ elliptic curves on the abelian surface $T\times T$,
   studied by Hirzebruch in \cite{Hirz84}. Using a quotient by a finite group action followed by a blow up to resolve singularities,
   Roulleau and Urz\'ua obtain in \cite[section 3]{RU:Chern} corresponding arrangements of
   $\frac 43(n^2-3)$ elliptic curves on a rational surface they call $H$, whose images (see \cite[Section 1]{Roulleau14})
   under a birational morphism to $\P^2$ give the arrangements of plane cubics we study here.
   The Roulleau-Urz\'ua arrangements on $H$ were crucial for their construction of surfaces of
   general type in \cite{RU:Chern}. In \cite{Roulleau14}, Roulleau shows that the corresponding 
   arrangements of plane cubics are interesting for another reason: the Harbourne index of the 
   union $C_n$ of the $\frac 43(n^2-3)$ cubics has limit $-4$ as $n\to\infty$. We note that
   no reduced plane curve is yet known with Harbourne index less than or even equal to $-4$.

The arrangements of cubics as given in \cite{Roulleau14} are not given explicitly;
they are described only as images under birational maps.
   In this paper we construct the same arrangements
   of cubics by elementary methods directly on $\P^2$,
   which allows us to give
   explicit equations for these interesting cubics.
   In the case $n=3$, the corresponding eight cubics
   coincide with the classical \emph{Halphen cubics}
   \cite{Halphen:recherches,ArtebaniDolgachev09,ADPHR93, Fr02}. For $n=3m$ with $m>1$,
   they are members of four pencils, where each pencil is spanned by
   two of the eight Halphen cubics. We call these pencils {\it Hesse singular point cubic pencils}.
   (The reason for this name is that these pencils can also be obtained 
   from the singular points of the four singular cubics in the Hesse pencil. Each of these singular cubics   
   has three singular points. Thus each choice of three of the four
   singular cubics in the Hesse pencil defines 9 points, and these 9 points
   are the base points of a cubic pencil. The four pencils defined this way are
   precisely the four pencils obtained from the eight Halphen cubics.)
   The specific members chosen from each Hesse singular point cubic pencil 
   depend on the $3m$-torsion points of $T$, so we refer to the
   specific cubics chosen as the \emph{Halphen cubics
   of order $m$}.

   Our construction relies on classical facts known for the so-called
   \emph{dual Hesse configuration} of 9 lines in the plane meeting
   by threes on 12 points, and on the geometry of the curve $T$.
   The Weierstrass function $\wp'$ is a morphism to $\P^1$
   of degre 3, which in the case of $T$ is triply ramified at 3 points.
   For each positive integer $m$, we denote by $\P^1[n]$ the images
   of the $n$-torsion points of $T$.
   As noted above, the Hesse singular point cubic pencils are defined by taking as base points subsets
   of 9 points among the 12 vertices of the dual Hesse configuration;
   each pencil has 3 reducible members
   which are composed of lines of the configuration.
   We parameterize the pencils so that the reducible members
   correspond to parameters $u\in \P^1$ belonging to the branch locus of
   $\wp'$. Then the Halphen cubics of order $m$ are defined as the
   cubics in the Hesse singular point cubic pencils with parameters in $\P^1[3m]$.
   Our first result is the following:
\begin{introtheorem}\label{Halphen_sing}
For each $n\in 3\N$, let $H(n)$ be the union of all Halphen cubics
of order $n/3$.
The singularities of $H(n)$ are: 12 points of multiplicity
$n^2-3$ at the vertices of the dual Hesse configuration, with
$n^2/3-1$ triple points infinitely near to them, and $(n^2-3)(n^2/3-3)$
quadruple points.
\end{introtheorem}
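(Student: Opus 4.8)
The plan is to read the singularities of $H(n)$ directly off the incidence geometry of the four Hesse singular point cubic pencils together with the $\wp'$-parameterization, and then to fix the one remaining numerical invariant by a global B\'ezout count. Throughout write $k=(n^2-3)/3$ for the number of Halphen cubics in a single pencil, so that $n^2-3=3k$, $n^2/3-1=k$ and $n^2/3-3=k-2$, and recall that each Halphen cubic is smooth.

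First I would record the combinatorial skeleton. Each of the twelve vertices of the dual Hesse configuration is a base point of exactly three of the four pencils---it is omitted only by the pencil whose excluded triangle it belongs to---and the nine base points of each pencil are nine distinct vertices. Since the Halphen cubics of order $m=n/3$ are the members with parameter in $\P^1[3m]$ off the branch locus of $\wp'$, each pencil contributes exactly $k$ smooth cubics, and all $4k$ cubics pass, three pencils at a time, through the vertices. Hence every vertex $v$ lies on exactly $3k=n^2-3$ of the cubics, giving the asserted multiplicity. Because the nine base points of a pencil are nine distinct points and the members are smooth, two members of one pencil meet transversally at each of them; in particular the $k$ members of a fixed pencil through $v$ realize $k$ distinct tangent directions at $v$.

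The heart of the vertex analysis is to show that the three pencils through $v$ realize the \emph{same} $k$ tangent directions, so that the $3k$ smooth branches at $v$ organize into $k$ groups of three, each consisting of one cubic from each pencil and sharing a tangent line. I would obtain this from the symmetry of the configuration: the stabilizer of $v$ in the symmetry (Hessian) group should contain an order-three element fixing $v$ and cyclically permuting the three pencils through it, compatibly with the $\wp'$-parameterization of their members by $\P^1[3m]$. This identifies the three M\"obius maps $(\text{parameter})\mapsto(\text{tangent line at }v)$ up to that cyclic action and forces their images on $\P^1[3m]$ to coincide; matching cubics by common tangent then produces $k=n^2/3-1$ triples. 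A single blow-up of $v$ separates the triples and exhibits exactly these as ordinary triple points infinitely near $v$, once one checks that within each triple the three proper transforms are pairwise transverse at the new point. I expect this matching of tangent data---and the pairwise transversality at the infinitely near points---to be the main obstacle, because it is precisely here that the arithmetic of the torsion parameters $\P^1[3m]$, rather than the coarse incidence alone, must be used; the most transparent route may be to lift the question to the abelian surface $T\times T$, where the cubics become translates of one-dimensional subtori and sharing a tangent at the image of a fixed point becomes an equality of slopes forced by the group law (a direct computation with the explicit pencils is an alternative).

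It remains to treat the intersection points off the vertices. Since all within-pencil intersections are confined to the base locus, no two cubics through a point $p$ that is not a vertex can belong to the same pencil, so $p$ lies on at most one cubic from each pencil. The key point is that if $p$ lies on cubics from two distinct pencils then it already lies on one cubic from each of the four: on $T\times T$ the four families are foliations by parallel subtori whose slopes are permuted by the complex multiplication, so a point lying on torsion leaves of two families has all four of its relevant ``coordinates'' torsion and hence lies on torsion leaves of all four, the four branches meeting pairwise transversally. Transporting this through the quotient and blow-up shows that every off-vertex multiple point of $H(n)$ is an ordinary quadruple point with one cubic from each pencil. Finally I would count them by B\'ezout bookkeeping. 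The total pairwise intersection number of the $N=4k$ cubics is $9\binom{N}{2}=18k(4k-1)$; the within-pencil intersections contribute $4\cdot 9\binom{k}{2}=18k(k-1)$, all at vertices, and the cross-pencil intersections at the twelve vertices---transverse for unmatched pairs, of local multiplicity two for the $k$ matched pairs in each of the three pencil-pairs at a vertex---contribute $12\cdot 3(k^2+k)$. Subtracting leaves $18k(k-2)$ for the off-vertex intersections, and dividing by the $\binom{4}{2}=6$ pairs meeting at each quadruple point yields $3k(k-2)=(n^2-3)(n^2/3-3)$, which is the asserted number of quadruple points. The only non-formal inputs are the transversality assertions and the absence of any further coincidences, both of which I expect to follow from the group-theoretic picture on $T\times T$.
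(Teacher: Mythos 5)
Your combinatorial skeleton (multiplicity $n^2-3$ at each vertex, $k=n^2/3-1$ infinitely near triples, ordinary quadruple points elsewhere) and your B\'ezout bookkeeping are correct and agree numerically with the paper, which proves the statement in the equivalent form of Theorem \ref{RU_on_plane} on the blow-up $X$ at the 12 vertices. However, the argument you propose for the central step --- that the three pencils through a vertex cut out the \emph{same} $k$ tangent directions --- fails as stated. An order-three element of the Hesse group $G_{216}$ fixing a vertex and cyclically permuting the three pencils through it does exist: for instance $g:(x:y:z)\mapsto(x:\eps y:z)$ fixes $v_0=(1:0:0)$ and permutes $\CC_h,\CC_\delta,\CC_\gamma$ cyclically. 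But $g$ acts \emph{nontrivially} on the line $E_{v_0}$ of directions at $v_0$ (it sends the direction $(y:z)$ to $(\eps y:z)$), so equivariance only gives that $g$ carries the direction set of one pencil onto that of the next; writing $S_t\subset E_{v_0}$ for the directions cut out by the Halphen cubics of $\CC_t$, you get $S_\delta=g(S_h)$, $S_\gamma=g(S_\delta)$, $S_h=g(S_\gamma)$, which is perfectly consistent with the three sets being pairwise distinct. To conclude equality you would need each $S_t$ to be invariant under $g|_{E_{v_0}}$, which is essentially what you are trying to prove. The ingredient that actually closes the gap --- and is the paper's key observation --- is that for each pencil through $v_0$ the M\"obius map (parameter $\mapsto$ tangent direction at $v_0$) sends the three \emph{singular} members to the three directions of the dual Hesse lines through $v_0$, i.e.\ onto the natural marking $M_{v_0}$ (this is \eqref{eq:specialfibers}); hence it is an isomorphism of marked lines, and since the equianharmonic torsion parameters are intrinsic under such isomorphisms (Remark \ref{parameters}), every one of the three pencils cuts out exactly the set $E_{v_0}[3m]$. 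Once this is observed, the cyclic symmetry is not needed at all.

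The points you defer to the abelian surface are also genuinely missing rather than routine: ordinariness of all multiple points, and the claim that an off-vertex point on two cubics lies on one cubic from each of the four pencils. Lifting to $T\times T$ presupposes that the Halphen cubics \emph{are} the torsion-translate (Roulleau--Urz\'ua) configuration, which is precisely Theorem \ref{H=RU}; its proof occupies Section \ref{sec:linseries} (linear series of $\varphi$, theta group action) and itself concludes by invoking the same marked-line characterization of Halphen cubics by their tangent directions at $v_0$ --- so this route cannot substitute for the missing step above, and it inverts the paper's logical order. The paper instead settles both points on $X$: transversality of all intersections (hence ordinariness of the triple and quadruple points) follows from Riemann--Hurwitz applied to the degree-3 restrictions $\phi_t|_{\tilde C_{t'_s}}$ of the elliptic fibrations, which are triply ramified over the marking and therefore unramified elsewhere; and the coincidence of the off-exceptional intersections of the various pairs of pencils follows from Corollary \ref{triplecover-parameters}: the intersection sets cut on a fixed component by two other pencils are both translates of its $3m$-torsion subgroup, and they share a triple point (e.g.\ on $E_{\gamma_0}$), hence are equal. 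With these two inputs your counting paragraph goes through verbatim.
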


   As a corollary the Harbourne index of $H(n)$ tends to -4 as $n$ grows.
   The configuration of Halphen cubics therefore behaves like the
   Roulleau-Urz\'ua configuration over which it is modelled.
   Our second goal is to understand the rational map
   $T\times T \dashrightarrow \P^2$ used by Roulleau and Urz\'ua
   to construct their configuration. In Theorem \ref{LinSeriesThm}
   we determine the linear series associated to the map,
   and using the action of the theta group, we prove that
   both configurations agree:

\begin{introtheorem}\label{H=RU}
   The curves that form the Roulleau-Urz\'ua configuration corresponding
   to the $n$-torsion points for $n=3m$ are the Halphen cubics of $m$-th order.
\end{introtheorem}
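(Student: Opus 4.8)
The plan is to identify both configurations inside a single well-understood object and then match them curve-by-curve. Both the Roulleau–Urz\'ua configuration and the Halphen cubics of order $m$ originate from arrangements of elliptic curves on the abelian surface $T\times T$, so the natural arena is $T\times T$ together with the finite group action and the birational morphism to $\P^2$ described in the Introduction. The first step is to make the Roulleau–Urz\'ua map explicit: by Theorem \ref{LinSeriesThm} (assumed available) the rational map $T\times T \dashrightarrow \P^2$ is given by a specific linear series, so I would first pin down this linear series and compute the images of the $4n^2$ elliptic curves of Hirzebruch's arrangement under the composite of the quotient, the resolving blow-up, and the birational morphism to $\P^2$. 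The goal of this step is to realize each Roulleau–Urz\'ua curve as an explicit plane cubic.

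The second step is to bring the theta group into play. The curves in Hirzebruch's arrangement are translates of a fixed set of elliptic curves by torsion points, and the $n$-torsion points carry a natural action of the theta group (the Heisenberg group attached to the relevant line bundle on $T\times T$). I would show that this action descends compatibly through the quotient and blow-up to an action on $\P^2$ that permutes the images of the arrangement curves transitively in a way matching how it permutes the $n$-torsion points. Concretely, I expect the theta group to act on the linear series of Theorem \ref{LinSeriesThm}, inducing projective transformations of $\P^2$ that preserve the whole configuration; tracking a single orbit then accounts for all the curves.

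The third step is the matching itself. Having described both families as explicit plane cubics parameterized by torsion data, I would verify they coincide. The Halphen cubics of order $m$ are, by definition, the members of the four Hesse singular point cubic pencils with parameters lying in $\P^1[3m]$, i.e.\ in the image under $\wp'$ of the $n$-torsion points; the Roulleau–Urz\'ua curves are the images of the $4n^2$ elliptic curves, again indexed by $n$-torsion. I would set up an explicit correspondence between these two index sets — the $n$-torsion points of $T$ on the Halphen side and the $n$-torsion data on $T\times T$ on the Roulleau–Urz\'ua side — compatible with the parameterization of the pencils by $\wp'$ and with the theta-group action, and conclude that corresponding members are equal as plane cubics. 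A useful consistency check is that both configurations consist of exactly $\frac43(n^2-3)$ cubics with the singularity structure recorded in Theorem \ref{Halphen_sing}, so counting and singularity data can corroborate the identification.

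The hard part will be the second step: controlling precisely how the theta group action on $T\times T$ descends to $\P^2$ and verifying that it acts on the linear series of Theorem \ref{LinSeriesThm} in the expected way. Relating a Heisenberg-type action on sections of a line bundle to honest projective transformations that preserve the configuration — and checking the equivariance of the whole chain (quotient, blow-up, morphism to $\P^2$, and the parameterization by $\wp'$) — is where the real content lies. Once this equivariance is established, the curve-by-curve identification in the third step should reduce to comparing the two torsion-indexed orbits, which the transitivity of the action makes tractable.
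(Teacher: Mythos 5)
There is a genuine gap, and it sits exactly where you yourself located the ``hard part'': step 2 does not work. Translation by a point $x\in A$ induces a projective transformation of $\P^2=\P\bigl(H^0(A,M)^*\bigr)$, where $M=V+H+\Delta+\Gamma$, only when $t_x^*M$ is \emph{linearly} equivalent to $M$, i.e.\ only when $x$ lies in the kernel $K(M)$ of $\phi_M$. That kernel is finite of order $\chi(M)^2=36$, and the only part of it that survives the blow-up conditions (vanishing at the nine points $p_{ij}$) is the group $\Fix(\sigma)$ of nine $\sigma$-fixed points (Lemma \ref{lemma:Fix-in-K}). For $n=3m>3$ almost all $n$-torsion points of $A$ lie outside $K(M)$: translating by them changes the line bundle to an algebraically but not linearly equivalent one, so there is no induced action on the linear series of Theorem \ref{LinSeriesThm}, hence no ``Heisenberg-type'' projective transformations of $\P^2$ and no orbit to track. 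The failure is also visible downstream: any projective transformation preserving the configuration must permute its points of multiplicity $n^2-3$, i.e.\ preserve the 12 dual Hesse points, so it lies in a fixed finite group (essentially the Hesse group) independent of $m$, which cannot act transitively on $4(3m^2-1)$ cubics once $m$ is large. So the transitive descended action your step 2 relies on does not exist, and steps 1 and 3 (which presuppose explicit equations for the images of all $4n^2$ curves, something that would require explicit theta functions) are left without support.

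What the paper does instead is use the theta group only where it is legitimately available, and replace your step 2 by a ramification argument. The projective representation $K(M)\to\PGL(H^0(A,M))$, restricted to $\Fix(\sigma)$, serves solely to compute the images of the 12 contracted translates and identify them with the dual Hesse points (part b) of Theorem \ref{LinSeriesThm}). Then a simple incidence count shows every Roulleau--Urz\'ua cubic lies in one of the four pencils $\CC_t$. The decisive step identifying \emph{which} members of each pencil occur is this: the curves of the arrangement meeting $V$ do so at the $n$-torsion points of $V$, in $\sigma$-orbits of three mapping to cubics with a common tangent direction at $v_0$; since the restriction of the quotient $B\to X$ to $V$ is a degree-3 morphism $V\to E_{v_0}$ triply ramified over the natural marking $M_{v_0}$, those tangent directions are exactly the equianharmonic $3m$-torsion parameters $E_{v_0}[3m]$ (Lemma \ref{triplecover}, Remark \ref{parameters}, Corollary \ref{triplecover-parameters}) --- and membership of the tangent direction in $E_{v_0}[3m]$ is precisely the defining property of the Halphen cubics of order $m$ in $\CC_h,\CC_\delta,\CC_\gamma$ (Theorem \ref{RU_on_plane}); the pencil $\CC_v$ is handled by the same argument applied to $H$. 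If you want to repair your proposal, this ramification mechanism is the missing idea: no group action on $\P^2$ indexed by $n$-torsion is needed, only the behaviour of the triple cover along the contracted curves.
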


The paper is organized as follows. In section \ref{sec:hesse-halphen}
we recall the classical construction of Hesse line configurations
and Halphen cubics, and we prove Theorem \ref{Halphen_sing}
along with additional information on the position of the singularities
(Theorem \ref{RU_on_plane}). Section \ref{sec:linseries}
is devoted to the study of the Roulleau-Urz\'ua configuration and
the map $T\times T \dashrightarrow \P^2$, and culminates in the
proof of Theorem \ref{H=RU}.


\section{The Hesse configurations and Halphen cubics}
\label{sec:hesse-halphen}

Recall the construction of the so-called Hesse configurations
(a modern account of this classical subject can be found in the
book \cite[Section 3.1]{Dolgachev}, see also
\cite{ArtebaniDolgachev09} and references therein for
their history and attributions).

Given a smooth plane cubic $C$ and a line $\ell$ joining two
flexes of $C$, the third intersection of $\ell$ and $C$ is
another flex. Altogether, there are  12 such \emph{lines of flexes},
and at each of the 9 flexes of $C$ exactly 4 of the 12 lines meet.
This configuration of lines and points is classically called
the \emph{Hesse line arrangement}
$(12_3,9_4)$; it does not depend on the choice of a cubic, i.e.,
the sets of 9 flex points of any smooth plane cubic are projectively
equivalent.

A triangle containing all 9 flexes is called a
\emph{triangle of flexes};
there are 4 such triangles
which together form the Hesse arrangement.
They can be obtained as follows.
Fix one of the flexes $p_0$ as the zero for the group law on $C$;
then the 9 flexes of $C$ form the 3-torsion subgroup $C[3]\cong(\Z/3\Z)^2$.
Each line of flexes $\ell$ through $p_0$ cuts on $C$ one of
its 4 cyclic subgroups of order 3.
The two cosets of this subgroup in $C[3]$ correspond to the two
lines of flexes which do not meet $\ell$ on $C$,
which toghether with $\ell$ form one of the triangles.

The polar curve of $C$ with respect to a flex $p$ is the degenerate conic
consisting of the tangent $T_pC$  and another line, called
the \emph{harmonic polar} of $p$. If $\ell$ is a line of flexes,
the harmonic polars of the three flexes on $\ell$ are concurrent,
and their point of intersection is the vertex opposite to $\ell$ in
the triangle of flexes to which it belongs.
Thus, each of the 9 harmonic polars goes through 4
vertices, one on each triangle of flexes, and at each of
the 12 vertices exactly 3 of the 9 lines meet. They
form the \emph{dual Hesse arrangement}
$(9_4,12_3)$, which again does not depend on the cubic.

The given curve $C$ and each triangle of flexes are cubics
through the 9 flex points. It follows that the 9 points of the Hesse configuration
are the base points of a pencil of cubics, called the \emph{Hesse pencil}; all
cubics in the pencil have the same flex points, and
every plane cubic is projectively
equivalent to one of the curves in the pencil. The Hesse pencil has
4 singular members, namely the 4 triangles of flexes.
We denote them $T_v, T_h, T_\delta, T_\gamma$.
For convenience we also fix the following notations
for the whole paper: $\T=\{v,h,\delta,\gamma\}$
will be the set of indices for the triangles;
$V_v=\{v_0, v_1, v_\infty\}$ will be the vertices of $T_v$; and similarly
for $V_h=\{h_0, h_1, h_\infty\}$, $V_\delta=\{\delta_0, \delta_1, \delta_\infty\}$
and $V_\gamma=\{\gamma_0, \gamma_1, \gamma_\infty\}$ (the symbols
$v, h, \delta, \gamma,$ and $0,1,\infty,$ are chosen to match
with constructions to appear later on). Moreover
we take $V=\bigcup_{t\in \T}V_t$ to denote the whole set of vertices
of the dual Hesse configuration, and for each $t\in \T$,
$\Lambda_t=V\setminus V_t$ to be the complement in $V$ of the set of vertices of
$T_t$.

The group of projective transformations of the plane which preserve the Hesse
arrangement (or equivalently the Hesse pencil) is a finite group $G_{216}$
called the \emph{Hesse group}. It obviously acts on the set
$\{T_v,T_h,T_\delta,T_\gamma\}$, and the image of the corresponding
representation $G_{216}\rightarrow S_4$ is the alternating
group $A_4$. Since it is not the full $S_4$ group, the order of the
indices $v, h, \delta, \gamma$ is not
entirely innocuous; we now introduce coordinates in order to be precise,
and for later use in the explicit determination of the Halphen cubics.

Take $C$ to be the Fermat cubic $x^3+y^3+z^3$, and let $\eps$
denote a  primitive third root of unity.
The 9 flex points of $C$
are the points $(-1,1,0)$, $(-1,\eps,0)$ and $(-1,\eps^2,0)$
and the 6 other points obtained from these by permutation;
it is customary to take as generators for the Hesse pencil
the Fermat cubic $C$ and the triangle $T_v=xyz$.
Thus the three coordinate points
      \be
         && v_0=(1:0:0),\quad v_1=(0:1:0),\quad v_\infty= (0:0:1)
      \ee
are three of the vertices of the dual Hesse configuration; the
remaining points are
      \be
         && h_0=(1:1:1),\quad h_1=(1:\eps:\eps^2),\quad h_\infty=(1:\eps^2:\eps) \\
         && \delta_0=(\eps:1:1),\quad \delta_1=(1:\eps:1),\quad \delta_\infty=(1:1:\eps) \\
         && \gamma_0=(\eps^2:1:1),\quad \gamma_1=(1:\eps^2:1),\quad \gamma_\infty= (1:1:\eps^2)
      \ee
The equations of the harmonic polar lines forming the dual Hesse configuration
   can be obtained using the coordinates of the points.
   For example the line through the
   points $v_0$ and $h_0$ has equation $z-y$ and also goes through
   $\delta_0$ and $\gamma_0$. We denote by $L_{i,j,k,l}$
   the line through $v_{u_i}$, $h_{u_j}$, $\delta_{u_k}$ and
   $\gamma_{u_l}$,
   so the line just considered is $L_{0000}$.
   Evaluating all collinearities we obtain
   \begin{equation}\label{eqn:lines}
   \begin{array}{lll}
      L_{0000}= z-y,\quad & L_{01\infty1}= z-\eps y,\quad & L_{0\infty1\infty}= \eps z-y\\
      L_{1011}= z-x,\quad & L_{110\infty}= \eps z- x,\quad & L_{1\infty\infty1}= z-\eps x\\
      L_{\infty0\infty\infty}= y-x,\quad & L_{\infty110}= y-\eps x,\quad & L_{\infty\infty01}= \eps y-x
   \end{array}
   \end{equation}

 For each $t \in \T$, the linear system of cubics
 through the 9 points in $\Lambda_t$ is a pencil $\CC_t$, which we call
 a Hesse singular point cubic pencil. It has three
 singular members, namely, for each vertex $t_u$ of $T_t$, $u\in \{0,1,\infty\}$,
 the union of the three harmonic polars concurrent at $t_u$ is a member
 of the pencil, which we call $C_{t_u}$.
 All nonsingular members of each pencil have $j$-invariant
 equal to zero. Halphen showed that the locus of
$9$-torsion points of all curves in the Hesse pencil consists of two members of
these pencils  $\CC_h, \CC_v, \CC_\delta, \CC_\gamma$
(so-called Halphen cubics);
the configurations of plane cubics described by Roulleau and Urz\'ua
consist of members of the same pencils (which we call
\emph{higher order Halphen cubics}).

Later on we shall give explicit equations of the Halphen cubics; for this purpose
we fix a coordinate $u$ in
each pencil $\CC_t$, such that $\{0,1,\infty\}$ correspond to the singular
members. Since the harmonic polars concurrent at $t_u$,
 in the notation above, are the
 $L_{u_v u_h u_\delta u_\gamma}$ with $u_t=u$,
the three singular members of $\CC_v$ are
\[
\begin{aligned}
      C_{v_0}\;\,&=\eps^2\prod L_{0***}\;\,
         =\eps^2 (z-y)(z-\eps y)(\eps z-y)   =  z^3-y^3 \\
      C_{v_1}\;\,&=\eps^2 \prod L_{1***}\;\,
         =\eps^2 (z-x)(\eps z- x)(z-\eps x)  =  z^3-x^3 \\
      C_{v_\infty}&=\eps^2 \prod L_{\infty***}
         =\eps^2 (y-x)(y-\eps x)(\eps y-x)   = y^3-x^3
\end{aligned}
\]
where the factor $\eps^2$ serves only a simplification purpose.
Similarly, the three singular memebers of $\CC_h$ are
\[
\begin{aligned}
      C_{h_0}\;\,=&&\prod L_{*0**}\;\,
	&=(z-y)(z-x)(y-x) = \\&&&-x^2y+xy^2+x^2z-y^2z-xz^2+yz^2 \\
      C_{h_1}\;\,=&&-\prod L_{*1**}\;\,
	&=-(z-\eps y)(\eps z- x)(y-\eps x)  = \\&&& \eps^2(x^2y-\eps^2xy^2-\eps^2x^2z+y^2z+xz^2-\eps^2yz^2) \\
      C_{h_\infty}=&&\prod L_{*\infty**}&=
      (\eps z-y)(z-\eps x)(\eps y-x)  = \\&&& -\eps(x^2y-\eps xy^2-\eps x^2z+y^2z+xz^2-\eps yz^2)
\end{aligned}
\]
   where the signs   are chosen so that $C_{h_0}+C_{h_\infty}=C_{h_1}$.
Finally, the singular members of $\CC_\delta$ and $\CC_\gamma$ are
\[
\begin{aligned}
       C_{\delta_0}\;\,=&&  \prod L_{**0*}\;\,
       &=(z-y)(\eps z-x)(\eps y-x) = \\&&&
       -x^2y+\eps xy^2+x^2z-\eps^2y^2z-\eps xz^2+\eps^2yz^2 \\
       C_{\delta_1}\;\,=&&  -\prod L_{**1*}\;\,
      &=-(z-\eps y)(z-\eps x)(y-x)   = \\&&&
      \eps^2(x^2y-xy^2-\eps^2x^2z+\eps^2y^2z+\eps xz^2-\eps yz^2) \\
      C_{\delta_\infty}=&&\prod L_{**\infty*}
      &=(\eps z-y)(z- x)(y-\eps x) =\\&&&
       -\eps(x^2y-\eps^2xy^2-\eps x^2z+\eps^2y^2z+\eps xz^2-yz^2)
\end{aligned}
\]
\[
\begin{aligned}
      C_{\gamma,0}\;\,=&&  \prod L_{***0}\;\,
       &=(\eps z-y)(\eps z-x)(y-x) = \\&&&
       -x^2y+xy^2+\eps x^2z-\eps y^2z-\eps^2xz^2+\eps^2yz^2 \\
      C_{\gamma,1}\;\,=&&-\prod L_{***1}\;\,
       &=-(z-\eps x)(y-\eps x)(z-y) = \\&&&
       \eps^2(x^2y-\eps^2xy^2-x^2z+\eps y^2z+\eps^2xz^2-\eps yz^2) \\
      C_{\gamma,\infty}=&&\prod L_{***\infty}
      &=(z-\eps y)(z-x)(\eps y-x) = \\&&&
      -\eps(x^2y-\eps xy^2-\eps^2x^2z+\eps y^2z+\eps^2xz^2-yz^2)
\end{aligned}
\]
With these choices, the member of the pencil $\CC_t$ corresponding
to the parameter $u\in \C$ will be
$C_{t_u}=C_{t_0} + u\,C_{t_\infty}$ for every $t\in \T$.

\paragraph{Halphen's Theorem.}
Fixing any of the flex points of a plane cubic curve $C$
as the zero point in the group law,
the set of $n$-torsion points for $n=3m$ coincides with the set of points
$p$ such that there exists a (possibly reducible) curve of degree $m$
meeting $C$ only at $p$;
thus the set of $3m$-torsion points of a cubic
is well defined and independent of the choice of a flex.
Halphen \cite{Halphen:recherches} studied the locus of points
of $3m$-torsion of all cubics in the Hesse pencil. For $m=1$
it consists on the 9 base points, as already said. For $m=2$,
it is the union of the 9 harmonic polars. For $m=3$ it is the union of
8 cubics, two in each pencil $\CC_u$, which we shall
describe next. For $m>3$, it is the union of 8 or 9 curves of higher
degrees, depending on the divisibility of $m$ by 3.

\begin{lemma}\label{triplecover}
   Let $T$ be an elliptic curve and $f:T\rightarrow \P^1$ a morphism of
   degree 3
   with triple ramification at 3 points. Then the $j$-invariant of $T$ is 0,
   and $f$ is unique up to translation and inversion on $T$, and
   up to automorphisms of $\P^1$.
\end{lemma}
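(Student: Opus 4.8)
The plan is to exploit the relationship between a degree-$3$ morphism $f:T\to\P^1$ with triple ramification at three points and the linear system of degree-$3$ divisors it defines. The key first observation is the Riemann--Hurwitz formula: for a degree-$3$ cover of $\P^1$ by an elliptic curve we have $0=2g(T)-2=3(2\cdot 0-2)+\deg R=-6+\deg R$, so the total ramification is $\deg R=6$. If all ramification is concentrated at three points where $f$ is totally ramified, each contributing a local ramification number of $2$, these three points already account for $\deg R=3\cdot 2=6$, exhausting the ramification. Thus the hypothesis is self-consistent and forces $f$ to be unramified away from these three points.

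Next I would pin down the $j$-invariant. The three ramification points are the preimages of three branch points $b_1,b_2,b_3\in\P^1$, and each fiber over a branch point is a single point (counted with multiplicity $3$). I would use the translation structure of $T$: fixing one totally ramified point as the origin $o$, the divisor $3o$ is a fiber of $f$, so $f$ is given by the complete linear system $|3o|$ (this is the unique $g^1_3$ since $h^0(3o)=3$ by Riemann--Roch, $h^0(3o)=\deg-g+1=3-1+1=3$, and a pencil inside it; in fact $|3o|$ \emph{is} a $g^2_3$ embedding $T\hookrightarrow\P^2$ and $f$ is a projection). The other two totally ramified points $p,q$ satisfy $3p\sim 3q\sim 3o$, i.e.\ $p$ and $q$ are nonzero $3$-torsion points with the property that $\{o,p,q\}$ are the flexes lying on a line. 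The existence of three totally ramified fibers that are \emph{single} points means three of the nine flexes are collinear and map to the three branch points; translating the collinearity condition through the group law and the Weierstrass model shows the curve must admit the order-$6$ automorphism, forcing $j(T)=0$. Concretely, $T=\C/\Z[\zeta]$ and $f$ can be identified with $\wp'$ up to the stated ambiguities.

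For the uniqueness statement I would argue as follows. Any such $f$ is a degree-$3$ map, hence (up to an automorphism of the target $\P^1$) is determined by the corresponding base-point-free pencil $\mathfrak d\subset|D|$ for some degree-$3$ divisor $D$. All such $D$ are linearly equivalent to $3o$ for a suitable choice of origin, and since two divisor classes of degree $3$ on $T$ differ by a translation, any two maps differ by precomposition with a translation. The remaining freedom in choosing the pencil inside $|3o|$ (which is a $2$-dimensional system) is resolved by requiring the three totally ramified fibers: the totally ramified points are exactly the flexes of the plane cubic image under $|3o|$, and the condition that the fibers over the three branch points be the three collinear flexes through $o$ singles out, up to the inversion $x\mapsto -x$ (which swaps the two nonzero collinear flexes $p,q$), a unique projection. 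Post-composing with $\mathrm{Aut}(\P^1)$ accounts for the remaining coordinate freedom on the target.

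The main obstacle I expect is making the uniqueness clean, specifically disentangling the three sources of ambiguity (translation on $T$, the inversion $x\mapsto-x$, and $\mathrm{Aut}(\P^1)$ on the target) so that they exactly match the claimed list without overcounting. The subtlety is that translation by a $3$-torsion point can interact with the choice of which three collinear flexes serve as the ramification points, and one must verify these interactions are absorbed by the stated symmetries rather than producing genuinely distinct maps. I would handle this by fixing the origin at one ramification point from the outset, which rigidifies the linear system to $|3o|$ and reduces the problem to the concrete Weierstrass picture, where $\wp'$ manifestly has the three required total ramifications and the automorphism group of the configuration is transparent.
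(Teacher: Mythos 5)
Your Riemann--Hurwitz step matches the paper's and is fine: the three triple points exhaust the allowed ramification, so $f$ is unramified elsewhere. The genuine gap is at the decisive step, the implication that such an $f$ forces $j(T)=0$. You pass to the embedding $T\hookrightarrow\P^2$ by $|3o|$, note (correctly) that the other two totally ramified points $p,q$ are $3$-torsion, hence flexes, and then claim that ``three of the nine flexes are collinear'' and that translating this collinearity through the group law ``shows the curve must admit the order-$6$ automorphism, forcing $j(T)=0$.'' That inference cannot work as stated: \emph{every} smooth plane cubic has collinear triples of flexes --- the twelve lines of the Hesse arrangement recalled in Section~\ref{sec:hesse-halphen} of the paper --- so collinearity of flexes carries no information about $j$. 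The hypothesis you actually need to exploit is strictly stronger: the three flex \emph{tangent lines} at $o,p,q$ are concurrent at the projection center, equivalently the three divisors $3o,3p,3q$ lie in the one-dimensional pencil defining $f$, not merely in the two-dimensional system $|3o|$. That this concurrency is special is precisely the content of the lemma; for instance, in the Hesse form $x^3+y^3+z^3+\lambda xyz=0$ the tangents at the three flexes on $x=0$ are concurrent if and only if $\lambda=0$. Your proposal asserts the conclusion at exactly this point instead of proving it (and the collinearity of $o,p,q$ itself, i.e.\ $q=-p$ in the group law, is also left unjustified --- it is true, but only as a consequence of the classification you are trying to establish). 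The same omission undermines the uniqueness part, where ``requiring the three totally ramified fibers singles out \dots\ a unique projection'' presupposes knowing which projection centers admit three totally ramified fibers.

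The missing idea is the one the paper's proof is built on, and it is short: because all ramification is total, the local monodromies at the three branch points are $3$-cycles, so the monodromy group is transitive (as $T$ is connected) and generated by $3$-cycles, hence equals $A_3\cong\Z/3\Z$; the cover is therefore Galois, and the deck transformation is an automorphism of $T$ of order $3$ fixing the three ramification points. An automorphism of an elliptic curve fixing a point is a group automorphism for that origin, and by the classification of automorphisms of elliptic curves (\cite[Theorem 10.1]{Silverman}, as cited in the paper) an order-$3$ group automorphism exists only when $j=0$, where it is complex multiplication by a cube root of unity, unique up to translation and inversion. This gives $j(T)=0$, identifies the ramification points as the fixed-point set of that automorphism (a coset of $\ker(1-\eps)$, which is where your collinearity claim comes from), and yields uniqueness of $f$ up to translation, inversion and automorphisms of $\P^1$, since $f$ is the quotient map by the deck group. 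To salvage your plane-model route you would have to either import this deck-transformation argument or carry out an explicit concurrency computation in the Hesse pencil; as written, the proposal does not prove the lemma.
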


\begin{proof}
   By the Riemann-Hurwitz formula, there is no more ramification than
   the three triple points, so
   $E$ is a 3 sheeted cover of $\P^1$ away from these points.
   Permuting the sheets of the cover induces an automorphism of $E$
   of order 3.
   But the only elliptic curves with an order 3 automorphism are those of $j$-invariant 0,
   and these support a single order 3 automorphism (up to translation and inversion),
   namely complex multiplication by a cube root of 1 \cite[Theorem 10.1]{Silverman}.
\end{proof}

The automorphisms of $\P^1$ which appear in lemma \ref{triplecover}
must obviously preserve the branch locus of $f$. To take care of these we
call \emph{marked line} a pair $\P^1_{M}=(\P^1, M)$ where $M=\{p,q,r\}$ is a set
of 3 distinct points in $\P^1$. A morphism of marked lines is an algebraic morphism
which preserves the markings (and so any two marked lines are isomorphic).
The group of automorphisms of the marked line is
finite, isomorphic to $S_3$, and by choosing a coordinate $u$ on $\P^1$
such that the marking is $M=\{0,1,\infty\}$, it is generated by the
morphisms $u\mapsto u^{-1}$ and $u\mapsto 1-u$.
Then, given an equianharmonic elliptic curve $T$ and a marked line $\P^1_{M}$,
there exixts a morphism  $f:T\rightarrow \P^1$ of
degree 3, with triple ramification over $M$, unique up to translation
and inversion on $T$, and up to the action of $S_3$ on $\P^1_{M}$.

In classical terminology, elliptic curves with $j$-invariant equal
to zero are called \emph{equianharmonic}.
In the setting of lemma \ref{triplecover}, consider the grup law on $T$ with
the zero at one of the ramification points.
Such a curve $T$ is obtained as
$T=\C/\Z[\zeta]$, where $\zeta=e^{2\pi i/6}$;
a degree 3 map is given by the Weierstrass derivative function
$\wp':T\rightarrow \P^1$;
and the corresponding order three automorphism
is induced by multiplication by $\eps=\zeta^2$. Its $3$ fixed points, which are
the ramification points of $\wp'$, are
$p_k=\frac k3 +\frac k3\zeta$ for $0\le k\le 2$.
Their images form the branch locus $M=\br(f)=\{i,-i,\infty\}\subset \P^1$.

For every choice of the zero at one of the ramification points
of $f$, the other two ramification points
are of $3$-torsion (in fact they form a cyclic subgroup of order 3,
as can be seen in the description via the $\wp'$ function)
hence for every positive integer $m$
the set $T[3m]$ of $3m$-torsion points on $T$
independent on which ramification
point is chosen as zero.
\begin{remark}\label{parameters}
Since the permuting of the 3 sheets is an automorphism,
if a (non-ramification) point in $T$ is of $3m$-torsion, then the
remaining two points in its fiber are $3m$-torsion as well.
So by lemma \ref{triplecover} the set of images
\[ \P^1_M[3m]=f(T[3m]) \setminus M\]
(where $M=\br(f)$ is the branch locus) is a well defined set
(independent of any choices and invariant under isomorphisms
of marked lines) consisting of $((3m)^2-3)/3=3m^2-1$ distinct points.
We call $\P^1_M[3m]$
the set of \emph{equianharmonic $3m$-torsion parameters}
relative to the markings $M$, or simply the set of equianharmonic $3m$-torsion parameters (denoted $\P^1[3m]$) if $M=\{0,1,\infty\}$.
\end{remark}

\begin{corollary}\label{triplecover-parameters}
    Let $T$ be an elliptic curve and $f:T\rightarrow \P^1$ a morphism of
   degree 3 with triple ramification at 3 points, and let $M=\br(f)$.
   The set $f^{-1}(M\cup \P^1_M[3m])$ is a translate of the subgroup of
   $3m$-torsion.
   \end{corollary}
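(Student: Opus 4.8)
The plan is to exploit the fact, implicit in Lemma~\ref{triplecover}, that $f$ is a cyclic Galois cover. Since permuting the three sheets induces an order-$3$ automorphism of $T$, the degree-$3$ map $f$ has a deck transformation group of order $3$ and is therefore Galois, realizing $\P^1$ as the quotient $T/\generated{\phi}$ for the order-$3$ deck transformation $\phi$. Placing the origin of the group law at one of the ramification points, Lemma~\ref{triplecover} identifies $\phi$ with multiplication by $\eps$, so that the fibers of $f$ are exactly the orbits $\set{x,\eps x,\eps^2 x}$ and
\[
   f\inverse(f(S)) = S\cup \eps S\cup \eps^2 S
\]
for every subset $S\subseteq T$. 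This orbit formula is the engine of the whole argument.

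First I would record where the ramification points sit. They are the fixed points of $\eps$, i.e.\ the kernel $\ker(1-\eps)$ of the isogeny $1-\eps$, an order-$3$ subgroup contained in $T[3]$; in particular the three ramification points are $3$-torsion and hence lie in $T[3m]$. Consequently $M=\br(f)=f(\ker(1-\eps))\subseteq f(T[3m])$, and therefore, by the very definition $\P^1_M[3m]=f(T[3m])\setminus M$ from Remark~\ref{parameters},
\[
   M\cup \P^1_M[3m]=f(T[3m]).
\]

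It then remains to compute the preimage. Applying the orbit formula with $S=T[3m]$ gives
\[
   f\inverse\bigl(M\cup \P^1_M[3m]\bigr)=f\inverse\bigl(f(T[3m])\bigr)=T[3m]\cup \eps\,T[3m]\cup \eps^2\,T[3m].
\]
Now $T[3m]=\ker[3m]$ is stable under every endomorphism of $T$, in particular under multiplication by $\eps$ (which commutes with $[3m]$), so $\eps\,T[3m]=\eps^2\,T[3m]=T[3m]$ and the right-hand side collapses to the subgroup $T[3m]$ itself. With the origin at a ramification point this is exactly the $3m$-torsion subgroup; for an arbitrary choice of origin the same intrinsic set $f\inverse(M\cup \P^1_M[3m])$ becomes a translate of it, which is the assertion.

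The computation is short, so the only real point to get right is the structural input: that $f$ is Galois (so that fibers are genuine $\eps$-orbits) and that the torsion subgroup is $\eps$-stable. Both follow immediately from Lemma~\ref{triplecover} together with the fact that $T[3m]$ is the kernel of an endomorphism, and once they are in place there is no genuine obstacle. The one subtlety worth flagging is the word ``translate'' in the statement: it is needed only because the origin of the group law is not pinned down, whereas $f\inverse(M\cup \P^1_M[3m])$ is intrinsic to $f$; normalizing the origin at a ramification point upgrades the conclusion to an equality with the subgroup.
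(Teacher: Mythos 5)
Your proof is correct and follows essentially the same route as the paper, which states this corollary without a separate argument as an immediate consequence of Lemma \ref{triplecover} and Remark \ref{parameters}: the deck transformation is (after placing the origin at a ramification point) multiplication by $\eps$, the ramification points are $3$-torsion, and fibers over $f(T[3m])$ therefore consist entirely of $3m$-torsion points, so that $f^{-1}(M\cup\P^1_M[3m])=f^{-1}(f(T[3m]))=T[3m]$ up to the translation accounting for the choice of origin. You merely make explicit the Galois structure of $f$ and the $\eps$-stability of $T[3m]$ that the paper's remark uses implicitly.
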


We will be interested in the explicit determination of $\P^1[3m]$.
The first case corresponds to 3-torsion points, $m=1$,
for which there are $3\cdot 1^2-1=2$ equianharmonic torsion parameters.
Invariance under the action of $S_3$ is enough to determine $\P^1_M[3]$ as
the \emph{Hessian pair} of the marking $M$,
which can also be characterized as the two points which together with
$M$ form an \emph{equianharmonic set} (i.e., with cross ratio a cube root of 1).
For $M=\{0,1,\infty\}$ one gets
$\P^1[3]=\{-\eps,-\eps^2\}$ (this is the only set of 2 points invariant for both
$t\mapsto t^{-1}$ and $t\mapsto 1-t$).
 $\P^1[6]$, although more involved, can be computed using
the $S_3$-action as well, but this is not the case for higher $m$.
Below we compute $\P^1[6]$ from the definition, a method that does generalise to all
$m$.

\begin{theorem}[Halphen, {\cite[\S 3]{Halphen:recherches}}]
Consider each pencil $\CC_t$, $t\in\T$ as a marked line,
where the marking consists of the three singular members $C_{t_u}$
with $u\in\{0,1,\infty\}$. Denote $\CC_t[3]$ the set of
two cubics in the pencil that correspond to equianharmonic $3$-torsion parameters.
Then
\[ \CC_h[3]\cup\CC_v[3]\cup\CC_\delta[3]\cup\CC_\gamma[3]\]
is the locus of $9$-torsion points of curves in the Hesse pencil.
\end{theorem}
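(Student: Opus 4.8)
The plan is to realise each of the eight Halphen cubics as a curve that meets \emph{every} smooth member of the Hesse pencil in a full coset of its $3$-torsion contained in its $9$-torsion, and then to match the eight resulting cosets with the eight nonzero classes of $C[9]/C[3]$. The engine is the Heisenberg subgroup $\mathcal H\cong(\Z/3\Z)^2$ of $G_{216}$ acting on $\P^2$ through the cyclic permutation $(x:y:z)\mapsto(y:z:x)$ and the scaling $(x:y:z)\mapsto(x:\eps y:\eps^2 z)$; on each smooth member $C$ of the Hesse pencil these generators are fixed-point-free, so $\mathcal H$ acts by translation by the nine base points, which form $C[3]$.

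First I would identify the Halphen cubics group-theoretically. Since $\P^1[3]=\{-\eps,-\eps^2\}$, the members of $\CC_v[3]$ are $C_{v_{-\eps}}=\eps x^3+\eps^2 y^3+z^3$ and $C_{v_{-\eps^2}}=\eps^2 x^3+\eps y^3+z^3$, and one checks directly that both are invariant under $\mathcal H$. Conceptually, $\mathcal H$ is normal in $G_{216}$ and lies in the kernel of $G_{216}\to A_4$, so it preserves each triangle $T_t$, hence each $\Lambda_t$, hence each pencil $\CC_t$; on the parameter line of $\CC_t$ it acts through an order-$3$ M\"obius transformation (for $\CC_v$ the cyclic permutation induces $u\mapsto (u-1)/u$), whose two fixed points are exactly $-\eps,-\eps^2$. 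Thus $\CC_t[3]$ consists precisely of the $\mathcal H$-fixed members of $\CC_t$, and since $A_4$ permutes the four pencils transitively it suffices to treat $\CC_v$.

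The inclusion is then immediate. Fix a smooth $C=C_\mu$ with a flex as origin and let $\Gamma\in\CC_t[3]$; as $\Gamma\ne C$ the intersection $\Gamma\cap C$ is a length-$9$ subscheme, it is $\mathcal H$-invariant, and because $\mathcal H$ acts freely on $C$ this subscheme is a single free orbit, i.e.\ a coset $q+C[3]$ of nine distinct points. Being a cubic section, $\Gamma\cap C$ sums to $0$ in the group law, while the coset sums to $9q$; hence $9q=0$ and $q+C[3]\subseteq C[9]$. So every Halphen cubic consists of $9$-torsion points of every member of the Hesse pencil, which already gives $\bigcup_{t\in\T}\CC_t[3]\subseteq\{9\text{-torsion locus}\}$.

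For equality I would count: the points of exact order $9$ on $C$ are the $72$ points in the eight nonzero cosets of $C[3]$ in $C[9]\cong(\Z/9\Z)^2$, and the eight Halphen cubics cut eight such cosets, so everything reduces to showing these are distinct. Within one pencil the two cosets differ, since otherwise the two Halphen cubics would share the nine coset points together with $\Lambda_t$, contradicting B\'ezout. The cross-pencil distinctness is the genuinely delicate point: I would either verify it once on the Fermat member by listing the eight cosets and then propagate to general $\mu$ (distinctness being an open condition), or match the two cosets from $\CC_t$ with the nonzero classes on the line of $C[9]/C[3]$ determined by the order-$3$ subgroup attached to $T_t$. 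Once distinctness holds, the eight cubics meet the general member in all $72$ exact-order-$9$ points, and since every component of the $9$-torsion locus dominates the base of the pencil this forces equality. The main obstacles are thus this exactness step and a bookkeeping subtlety in the statement: the base points, being $3$-torsion, are $9$-torsion yet lie on no Halphen cubic, so ``the locus of $9$-torsion points'' has to be read as its one-dimensional part, equivalently the points of exact order $9$.
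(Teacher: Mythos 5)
A preliminary remark: the paper itself gives no proof of this theorem --- it is quoted as a classical result of Halphen, with the argument deferred to \cite{Halphen:recherches} and the modern accounts in \cite{ADPHR93,Fr02} and \cite[Proposition 5.2]{ArtebaniDolgachev09}. So your proposal has to stand on its own, and its architecture is sound: the Heisenberg group $\mathcal H\cong(\Z/3\Z)^2$ does lie in the kernel of $G_{216}\to S_4$, hence preserves each $\Lambda_t$ and each pencil $\CC_t$; it acts on each smooth Hesse member $C$ as translation by $C[3]$; the induced order-$3$ M\"obius action on the parameter line of $\CC_t$ cyclically permutes $\{0,1,\infty\}$ and so fixes exactly the equianharmonic pair $\{-\eps,-\eps^2\}$, identifying $\CC_t[3]$ with the $\mathcal H$-invariant members; freeness of the action forces $\Gamma\cap C$ to be a single reduced coset $q+C[3]$, and the sum-to-zero property of cubic sections gives $9q=0$. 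This correctly proves the inclusion of the eight cubics in the $9$-torsion locus, and your reading of the statement (exact order $9$, equivalently the one-dimensional part of the locus) is the right one.

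The gaps are both in the equality step, and you flag them without closing them. First, \emph{exactness}: you never rule out that some $\Gamma\in\CC_t[3]$ cuts the trivial coset $C[3]$ on some member, i.e.\ passes through the nine flexes; this is needed for the count ($72$ points of exact order $9$) and for the final identification of the loci. It can be closed quickly: the flexes are a complete intersection of two cubics, so by Noether's $AF+BG$ theorem any cubic through them belongs to the Hesse pencil; but $\Gamma$ contains the nine vertices in $\Lambda_t$, whereas a smooth Hesse member contains no dual-Hesse vertex (each vertex, not being a base point, lies on exactly one member --- the triangle of which it is a vertex) and each triangle contains only its own three. Second, \emph{cross-pencil distinctness}: you call this ``the genuinely delicate point'' and propose two unexecuted remedies, one of which (verification on the Fermat member propagated by openness) only yields distinctness for generic $\mu$ and so must in any case be combined with your domination argument. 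In fact this step is no harder than the within-pencil case and follows from your own B\'ezout trick: Halphen cubics from different pencils $\CC_t,\CC_{t'}$ are distinct irreducible cubics sharing the six points of $\Lambda_t\cap\Lambda_{t'}$, and the coset points lie on a smooth Hesse member, hence away from the twelve vertices; if the two cosets coincided, the cubics would share at least $6+9=15>9$ points. With these two elementary insertions your counting argument (eight distinct nonzero cosets, hence all $8\cdot 9=72$ points of exact order $9$ on \emph{every} smooth member) completes the proof, with no need to restrict to generic members at all.
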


   Using the fact that $\P^1[3]=\{-\eps,-\eps^2\}$,
   the 8 Halphen cubics can be explicitly written as follows.
\[
\begin{aligned}
      C_{v_{-\eps}}&=x^3+\eps^2y^3+\eps z^3 &
      	C_{v_{-\eps^2}}&=x^3+\eps y^3+\eps^2z^3 \\
      C_{h_{-\eps}}&=x^2y+y^2z+xz^2 &
      	C_{h_{-\eps^2}}&=xy^2+x^2z+yz^2 \\
      C_{\delta_{-\eps}}&=x^2y+\eps^2y^2z+\eps xz^2 &
      	C_{\delta_{-\eps^2}}&=xy^2+\eps^2x^2z+\eps yz^2 \\
      C_{\gamma_{-\eps}}&=x^2y+\eps y^2z+\eps^2xz^2 &
      	C_{\gamma_{-\eps^2}}&=xy^2+\eps x^2z+\eps^2yz^2
\end{aligned}
\]
This same list also arises in a somewhat different way in \cite{ADPHR93, Fr02}
and \cite[Proposition 5.2]{ArtebaniDolgachev09}, where a modern account of Halphen's theorem is given.
In addition to computing the 9-torsion of members of the Hesse pencil,
the Halphen cubics are special in their Hesse singular point cubic pencils, in the
following ways. First, the base points of the pencil are a translate
(with respect to the group law of the Halphen cubic)
of its set of 3-torsion points; and second, they intersect one another
only in base points, tangently in sets of three (from distinct pencils).
It is these latter properties that we
seek to generalize in higher order Halphen cubics.
Denote $\CC_t[3m]$ the union of the $3m^2-1$ cubics in the pencil
$\CC_t$ corresponding to equianharmonic $3m$-torsion parameters, and call them
the \emph{Halphen cubics of order $m$}. In order to better describe their
intersections we consider the blow up $X\rightarrow \P^2$ at the 12
points of $V$, and denote $\tilde{\CC}_t[3m]$ the strict transforms.
Denote $L$ the pullback to $X$ of the class of a line, $E_{t_u}$
the exceptional divisor above the point $t_u$,
$E$ the sum of all 12 exceptionals, and
\[E_t= \sum_{t'_u\in \Lambda_t} E_{t'_u} =
\sum_{\substack{t'\ne t \\ u\in \{0,1,\infty\}}} E_{t'_u}\]
the divisor above the 9 point set $\Lambda_t$;
thus each Hesse singular point cubic pencil can be written
$\CC_t=|3H-E_t|$.
Each exceptional divisor $E_{t_u}$ carries a natural marking
\begin{equation}\label{marking}
 M_{t_u}=E_{t_u}\cap \tilde C_{t_u}
\end{equation}
consisting of the
directions of the three lines in the dual Hesse configuration going through
the point $t_u$ (these are the component lines of $C_{t_u}$).
In the sequel, unless explicitly specified, the equianharmonic torsion
points on $E_{t_u}$ will always be considered with respect to the
natural marking, and we denote them
$E_{t_u}[3m]=(E_{t_u})_{M_{t_u}}[3m]$.

Note that, for each $t'\ne t$ we have
\begin{equation}
  \label{eq:specialfibers}
E_{t_u}\cap\left(\tilde C_{t'_0} \cup \tilde C_{t'_1} \cup \tilde C_{t'_\infty} \right)
= E_{t_u}\cap\tilde C_{t_u}=M_{t_u}.
\end{equation}

\begin{theorem}\label{RU_on_plane}
The reducible curve $H_m=\tilde{\CC}_h[3m] \cup \tilde\CC_v[3m]
\cup \tilde\CC_\delta[3m]\cup\tilde\CC_\gamma[3m]$ belongs
to the linear system $|12(3m^2-1)L-9(3m^2-1)E|$, and has the following
singularities:
\begin{enumerate}
\item $3m^2-1$ ordinary triple points on each $E_{t_u}$, for a total of $12(3m^2-1)$
triple points;
\item $9(3m^2-1)(m^2-1)$ ordinary quadruple points off $E$.
\end{enumerate}
Moreover, each component of the curve passes through 9 of the triple
points and $9(m^2-1)$ of the quadruple points, which together constitute
a translate of its $3m$-torsion subgroup.
\end{theorem}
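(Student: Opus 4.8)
My plan is to handle the three assertions in turn, using throughout that a Halphen cubic $C\in\CC_t[3m]$ is a nonsingular member of $\CC_t=|3L-E_t|$, so that on $X$ it satisfies $C\sim 3L-E_t$ and $C^2=0$, passes simply through the nine base points $\Lambda_t=V\setminus V_t$, and (its parameter avoiding $\{0,1,\infty\}$) meets no vertex of $V_t$. For the linear equivalence class I would add the four classes $(3m^2-1)(3L-E_t)$, $t\in\T$, and observe that each of the twelve vertices belongs to $\Lambda_{t'}$ for exactly the three indices $t'\ne t$, so that $\sum_{t\in\T}E_t=3E$. This gives $H_m\sim 12(3m^2-1)L-3(3m^2-1)E$, whose $E$-coefficient is the vertex multiplicity $n^2-3$ recorded in Theorem \ref{Halphen_sing}.

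For the triple points (assertion 1), fix a vertex $t_u$; the components of $H_m$ through it are exactly the Halphen cubics of the three pencils $\CC_{t'}$ with $t'\ne t$. For each such $t'$, sending a member of $\CC_{t'}$ to its tangent direction at the simple base point $t_u$ defines a morphism $\phi_{t'}\colon\CC_{t'}\to E_{t_u}$; since prescribing that direction is one linear condition on the pencil, $\phi_{t'}$ has degree one and hence is an isomorphism of $\P^1$'s. The three singular members $C_{t'_0},C_{t'_1},C_{t'_\infty}$ are carried to the three component lines through $t_u$, i.e. to the marking $M_{t_u}$, so $\phi_{t'}$ is an isomorphism of marked lines and therefore (Remark \ref{parameters}) maps $\P^1[3m]$ onto $E_{t_u}[3m]$. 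As $E_{t_u}[3m]$ is the same set for all three $t'$, the $3m^2-1$ cubics of each pencil meet $E_{t_u}$ in these $3m^2-1$ common points, three branches (one per pencil) through each.

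The core step, which also yields the ``moreover'' clause, analyzes a fixed component $C\in\CC_t[3m]$ through its group law. For each $t'\ne t$ the pencil map of $\CC_{t'}$ restricts on $C$ to a degree-$3$ morphism $g_{t'}|_C\colon C\to\P^1$ (Bezout: $9$ minus the six shared base points $\Lambda_t\cap\Lambda_{t'}$), whose fibre over each singular value $u''$ is supported with multiplicity three at the concurrence point $t'_{u''}\in C$, because $C\cdot C_{t'_{u''}}=9$ splits as those six base points plus $3\,t'_{u''}$. Thus $g_{t'}|_C$ is triply ramified over $M=\{0,1,\infty\}$ at $V_{t'}$; since $C$ is equianharmonic, Lemma \ref{triplecover} identifies it with the $\wp'$-map and Corollary \ref{triplecover-parameters} makes $\tau_{t'}:=(g_{t'}|_C)^{-1}(M\cup\P^1[3m])$ a translate of $C[3m]$. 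Next I would show, from the dual Hesse collinearities (the three singular members of $\CC_t$ partition $\Lambda_t$ into collinear triples in three distinct ways), that $\Lambda_t$ is a translate of $C[3]$; taking the origin at a point of $V_{t'}\subseteq\Lambda_t$ then gives $\Lambda_t=C[3]\subseteq\tau_{t'}$. Since the three cosets $\tau_{t'}$ of $C[3m]$ all contain $\Lambda_t$ they coincide in a single coset $\tau$, and (same-pencil members being disjoint from $C$ on $X$) the points of $C$ on the rest of the configuration are exactly $\tau$: the nine points $\Lambda_t$ on $E$, and the $9(m^2-1)$ points of $\tau\setminus\Lambda_t$ off $E$, each of the latter lying on one cubic from each other pencil, hence a quadruple point together with $C$.

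Finally I would pin down multiplicities and totals by intersection theory on $X$. A direct computation gives $C\cdot(H_m-C)=9(3m^2-1)$, while the structure above produces exactly $9\cdot 2+9(m^2-1)\cdot 3=9(3m^2-1)$ local branches of $H_m-C$ meeting $C$ (two at each of the nine triple points, three at each quadruple point); as each contributes at least one to the intersection number, all contribute exactly one, so every such branch is transverse to $C$. Applying this to every component shows all branches at each singular point are pairwise transverse, so the triple and quadruple points are ordinary, and the total number of quadruple points follows from the incidence count $4(3m^2-1)\cdot 9(m^2-1)/4=9(3m^2-1)(m^2-1)$. I expect the main difficulty to be the middle step: recognizing $g_{t'}|_C$ as the canonical triple cover and, via $\Lambda_t$ being a $C[3]$-coset, forcing the three translates $\tau_{t'}$ to fuse into one — that is, converting the projective incidence data into the single group-theoretic statement that the special points of each component form one coset of $C[3m]$. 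The class computation and the transversality bookkeeping are then routine.
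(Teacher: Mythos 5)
Most of your outline tracks the paper's proof: the class computation, the triple points via the marked-line isomorphism $\CC_{t'}\cong(E_{t_u},M_{t_u})$ (this is exactly what the paper's ``by construction'' refers to), and the cosets $\tau_{t'}$ coming from Corollary \ref{triplecover-parameters} all match, and your closing intersection count is a sound alternative to the paper's Riemann--Hurwitz transversality argument. (Your class $12(3m^2-1)L-3(3m^2-1)E$ is also the correct one: it agrees with the paper's own proof and with the multiplicity $n^2-3$ of Theorem \ref{Halphen_sing}, so the coefficient $9(3m^2-1)$ in the printed statement is a typo, not an error on your side.) However, the step you yourself call the crux is a genuine gap: the dual Hesse collinearities do \emph{not} imply that $\Lambda_t$ is a translate of $C[3]$, and for $m\ge 2$ that statement is in fact false for most components of $H_m$. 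With the origin at a flex, so that collinear triples sum to zero, the nine collinearities force only $\Lambda_t=(a+\langle\omega\rangle)\cup(b+\langle\omega\rangle)\cup(-a-b+\langle\omega\rangle)$ for some $\omega\in C[3]$ and $a,b\in C$; every configuration of this shape satisfies all nine collinearities, and it is a coset of $C[3]$ only if additionally $b-a\in C[3]$ and $3a\in C[3]$, which collinearity cannot supply. Nor could any argument of this kind: the collinearity data is identical for every smooth member of $\CC_t$, whereas the coset property characterizes the order-one Halphen cubics. Indeed, if $\Lambda_t$ were a coset of $C[3]$ for $C=C_{t_u}$, then taking the origin at a ramification point of $g_{t'}$ gives $\Lambda_t=C[3]$, so the point $\tilde C\cap E_{s_w}$ over a shared vertex would lie in $g_{t'}^{-1}(M\cup\P^1[3])$; by your own first paragraph this point corresponds to the parameter $u$ under a marked isomorphism, and since $M\cup\P^1[3]$ is stable under the $S_3$-action this forces $u\in\P^1[3]$. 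Hence your lemma fails for every component with $u\in\P^1[3m]\setminus\P^1[3]$, and such components exist exactly when $m\ge2$. (The same failure is visible in the $T\times T$ model: on the image of $V+q$ with $q\in T$ of order $3m$, the three triples of base points are the fixed-point subgroup $K$ and its translates $q+K$ and $\zeta q+K$, so differences across triples have order $3m$, not $3$.)

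Fortunately, what you actually need is much weaker and already follows from your first paragraph: it suffices that the three cosets $\tau_{t'}$ of $C[3m]$ share a point. In fact every base point of $C$ works: it is $\tilde C\cap E_{s_w}$ for some vertex $s_w$ with $s\ne t$ and lies in $E_{s_w}[3m]$, so for each $t'\ne t$ it lies either on $E_{t'_w}$ (when $s=t'$) or on the unique member of $\CC_{t'}[3m]$ through that point of $E_{s_w}[3m]$ (when $s\ne t'$); in both cases it belongs to $\tau_{t'}$. The three cosets therefore intersect and hence coincide, after which your identification of the quadruple points, the count $C\cdot(H_m-C)=9(3m^2-1)$ forcing all branches transverse, and the totals go through unchanged. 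This repair is precisely the paper's own mechanism, in slightly stronger form: the paper exhibits a single common point, namely $\tilde C_{v_s}\cap E_{\gamma_0}$ on the exceptional divisor over a vertex of the fourth triangle, observes that as a triple point it lies on members of both $\CC_h[3m]$ and $\CC_\delta[3m]$, and concludes $A_h\cap A_\delta\ne\emptyset$, whence $A_h=A_\delta$.
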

\begin{proof}
The linear equivalence class is clear from the fact that
\(
\CC_t[3m]\sim (3m^2-1)\left(
3L-E_t
\right).
\)

Each pencil $\tilde\CC_t$ induces an elliptic fibration
$\phi_t:X\rightarrow \P^1$ for which the 9 exceptional components above
$\Lambda_t$ are sections. For every exceptional component
$E_{t_u}$, three of the fibrations
have it as a section (those $\phi_{t'}$ with $t'\ne t$), and
for the remaining fibration $\phi_t$, it is a component of a fiber,
with multiplicity 3. Indeed,
\begin{equation}\label{eq:reducible_fibers}
 \phi_t^{-1}(\phi_t(C_{t_u}))=\tilde C_{t_u}+3E_{t_u}
\end{equation}
because $C_{t_u}$ has multiplicity 3 at $t_u$.

Any two pencils among the 4 share 6 base points, and hence
have intersection number $3^2-6=3$. The restriction of
$\phi_h,\phi_\delta$ and $\phi_\gamma$
to any smooth curve $\tilde C_{v_s}$, $s\in \C\setminus\{0,1\}$
in the pencil $\tilde \CC_v$ gives therefore
a morphism of degree 3, $\phi_t|_{\tilde C_{v_s}}:\tilde C_{v,s}\rightarrow E_{v_0}$.
Since $t_u$  is a base point of $\CC_v$ for each $t\ne v$ and $u\in\{0,1,\infty\}$
$\tilde C_{v_s}$ meets $E_{t_u}$ at a point $p=p_{t_u}$ and
by \eqref{eq:reducible_fibers},
\[
 \left(\phi_t|_{\tilde C_{v_s}}\right)^{-1}(C_{t_u}\cap E_{v_0})=
 \tilde C_{h_s}\cap \left(\tilde C_{t_u} + 3 E_{t_u}\right) \ge 3p,
\]
but since the degree of the map is 3, the inequality must in fact
be an equality
(so $\tilde C_{h_s}$ meets no point on $\tilde C_{t_u}$ and
the intersection with $E_{t_u}$ is transversal).
This holds for all  $u\in\{0,1,\infty\}$, so
$\phi_t|_{\tilde C_{h_s}}$ has triple ramification
above $M_{h_0}$. As the action of the Hesse group on $\T$ is 2-transitive,
for every $t\ne t'$ the restriction of $\phi_t$
to a fiber $\tilde C_{t'_s}$ has triple ramification
above $M_{t'_0}$.
Since $\phi_t|_{C_{t'_s}}$ has no additional ramification (by Riemann-Hurwitz)
all intersections between $\tilde C_{t'_s}$ and nonsingular fibers
of $\phi_t$ are transversal.
Therefore, intersections between components of $H_m$
(which are nonsingular fibers of the pencils)
are transversal; this means that all multiple points of $H_m$,
which are generated by such intersections, are ordinary.
Moreover, by corollary \ref{triplecover-parameters},
\[\tilde C_{t'_s}\cap (\tilde\CC_{t}[3m]\cup E_{t_0}\cup E_{t_1}\cup E_{t_\infty})=
\phi_t|_{C_{t'_s}}^{-1} \left( E_{t'_0}[3m] \cup M_{t'_0} \right)\]
consists of the $3m$-torsion points of $C_{t'_s}$ up to translation, for each
$t'\ne t$.

By construction,  for each $t\ne t'$, the intersection
$\tilde\CC_t[3m]\cap E_{t'_0}$
consists of the equianharmonic $3m$-torsion parameters
$E_{t'_0}[3m]$, i.e., through each point of $E_{t'_0}[3m]$ there are three
components of $H_m$, one in each pencil $\CC_t$, $t\ne t'$.
These points are therefore triple points of $H_m$, and
there are $3m^2-1$ such points on each of the 12 exceptional components.

Taking into account the linear equivalence class of the $\CC_t$
and the intersection product on $X$, we
see that besides the triple points, each pair
$\tilde\CC_t[3m]$, $\tilde\CC_{t'}[3m]$ intersect at
$9(3m^2-1)(m^2-1)$ additional points. The proof will be complete by showing
that these belong to the two remaining $\tilde\CC_{t''}[3m]$'s.
We accomplish this by proving that, for every component $\tilde C_{v_s}$
of $\tilde\CC_v[3m]$, the sets
\be
&& A_h=\tilde C_{v_s}\cap(\tilde\CC_h[3m]\cup
E_{h_0}\cup E_{h_1} \cup E_{h_\infty})\\
&& A_\delta=\tilde C_{v_s}\cap (\tilde\CC_{\delta}[3m]\cup
E_{\delta_0}\cup E_{\delta_1} \cup E_{\delta_\infty})
\ee
are equal; then
\[\tilde\CC_v[3m]\cap\tilde\CC_h[3m] \setminus E=\tilde\CC_v[3m]\cap \tilde\CC_{\delta}[3m] \setminus E\]
and by symmetry all pairs  $\tilde\CC_t[3m]$, $\tilde\CC_{t'}[3m]$
intersect at the same set of $9(3m^2-1)(m^2-1)$ points,
which finishes the proof.

Indeed, by corollary \ref{triplecover-parameters} both $A_h$ and
$A_\delta$ are obtained from the set of $3m$-torsion
points of $\tilde C_{v_s}$ by suitable translations according
to the group law in $\tilde C_{v_s}$, therefore $A_h=t_p(A_\delta)$
for some point $p\in \tilde C_{v_s}$.
Since $\tilde C_{v_s}$ meets $E_{\gamma_0}$ at one of the triple points,
which must also belong to $\tilde\CC_h[3m]$ and
$\tilde\CC_{\delta}[3m]$, it follows that $A_h\cap A_\delta$ is nonempty.
Therefore $p$ is of $3m$-torsion, and $A_h=A_\delta$ as claimed.
\end{proof}

\paragraph{Explicit computation of the equianharmonic torsion parameters.}
   Our method to compute the higher order Halphen cubics geometrically is based
   on the following remark.

\begin{remark}
The plane cubic curve $C=x^3+y^3-z^3$ is a $j$-invariant 0 curve, and each
of the three lines $L_{1***}$ in the dual Hesse configuration
going through the point $v_1=(0,1,0)$ is a flex line for $C$
(i.e., tangent to $C$ at a flex point).
For brevity, in this section we denote
these lines simply
\be
      L_{1}=\, -\eps^2(z-x),\quad & L_{0}=\, \eps z- x,\quad &
      L_{\infty}=\, z- \eps x
\ee
(With respect to the equations $L_{1***}$ above, a product with
adequate constant coefficients was done so that $L_0+L_\infty=L_1$).
Thus the linear series on $C$ given by the pencil of lines through
$v_1$ defines a morphism $C\to E_{v_1}$ with triple ramification
above the points corresponding to the directions of the $L_{u}$,
which form exactly the set $M_{v_1}$;
and the equianharmonic torsion parameters (with respect to $M_{v_1}$)
can be computed as the projections to $E_{v_1}$ of the
torsion points on $C$.
\end{remark}

Once the $n$-torsion points on $C$ are known, their projections to
 $E_{v_1}$ (which means their $(x,z)$ coordinates)
 are the equianharmonic $n$-torsion parameters.
   The torsion points can be found in principle solving algebraic
   equations involving so-called division polynomials \cite{La78},
   so the method works uniformly for all $n$.
   In this section we will determine the equianharmonic 6-torsion parameters,
   where we can find the required 6-torsion points
   using a more geometric method.
   We then produce the 44 Halphen cubics of order 2.
   These include the 8 order 1 cubics, so we have 36 still to find.

\begin{lemma}
   The 6-torsion points of $C$ are obtained by adding
   (using the group law on $C$)
   each
   of the nine points
   \be
      (1,0,1), (1,0,\eps), (1,0, \eps^2),
      (1,-1,0), (1,-\eps,0), (1,-\eps^2,0),
      (0,1,1), (0,1,\eps), (0,1,\eps^2)
   \ee
   to each of the four points
   \be
      (1,0,1), (1,-b,-1), (1,-\eps b,-1), (1,-\eps^2b,-1)
   \ee
   where $b^3-2=0$.
\end{lemma}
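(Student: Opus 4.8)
The plan is to exploit the coprime (Chinese remainder) decomposition $C[6]=C[2]\oplus C[3]$ and to identify the two factors explicitly. Fix a flex of $C$ as the origin $O$ for the group law; note that this choice is harmless, since any two flexes differ by a $3$-torsion point and translation by such a point preserves $C[6]$, so the \emph{set} $C[6]$ does not depend on which flex is taken as origin. The natural choice consistent with the statement is $O=(1,0,1)$, which is a flex of $C=x^3+y^3-z^3$. Because $\gcd(2,3)=1$, the addition map $C[2]\times C[3]\to C[6]$, $(Q,R)\mapsto Q+R$, is a bijection: its inverse is $P\mapsto(3P,-2P)$, and equivalently $C[2]\cap C[3]=\{O\}$ forces the $4\cdot 9=36$ sums to be distinct and to exhaust the $36$ points of $C[6]$. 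Thus it suffices to show that the nine points of the first list are exactly $C[3]$ and that the four points of the second list are exactly $C[2]$, with $O=(1,0,1)$ serving as the common identity in both lists.

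For the $3$-torsion I would use the classical fact that, with a flex as origin, $P\in C[3]$ if and only if $P$ is a flex of $C$ (the tangent at $P$ meets $C$ triply, i.e. $3P=O$). The flexes of $C$ are cut out by the Hessian, whose determinant for $F=x^3+y^3-z^3$ is a nonzero multiple of $xyz$; intersecting $\{xyz=0\}$ with $C$ gives three points on each coordinate line, nine in all (and no coordinate vertex lies on $C$, so they are distinct). A direct comparison, after rescaling the projective coordinates, shows these are precisely the nine points of the first list, so that list is $C[3]$.

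For the $2$-torsion the key point is that $P\ne O$ is a $2$-torsion point if and only if the tangent to $C$ at $P$ passes through $O$ (equivalently $-P=P$, where $-P$ is the residual intersection of the line $OP$ with $C$). Writing the tangent at $P=(x_0,y_0,z_0)$ as $\sum_i\partial_iF(P)\,x_i=0$ and imposing that it contain $O=(1,0,1)$ gives the single condition $x_0^2=z_0^2$. The branch $x_0=z_0$ returns only $O$ itself, while $x_0=-z_0$ together with $F=0$ yields $2x_0^3+y_0^3=0$; normalising $x_0=1,\ z_0=-1$ produces exactly the three points $(1,-b,-1),(1,-\eps b,-1),(1,-\eps^2 b,-1)$ with $b^3=2$. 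Together with $O$ these are four distinct points, hence all of $C[2]$, matching the second list.

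Assembling the pieces: since the first list is $C[3]$, the second list is $C[2]$, and $C[6]=C[2]\oplus C[3]$, the $36$ sums of a point from the second list with a point from the first list are precisely the $36$ elements of $C[6]$. I expect no serious obstacle here — the lemma is essentially a bookkeeping exercise once the origin is taken at a flex, so that flexes coincide with $C[3]$ and the ``tangent through $O$'' characterisation of $C[2]$ is valid. The only genuinely computational input is solving $2x_0^3+y_0^3=0$, which is exactly what introduces the cube root $b$ of $2$ and pins down the specific coordinates appearing in the statement.
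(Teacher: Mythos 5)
Your proposal is correct and takes essentially the same approach as the paper's proof: decompose $C[6]$ into sums of $2$-torsion and $3$-torsion points, identify the nine listed points as the flexes of $C$ (the $3$-torsion for a flex taken as origin), and characterize the nontrivial $2$-torsion points as those whose tangent line passes through the identity $(1,0,1)$, which produces the three points involving $b$ with $b^3=2$. The only difference is that you spell out the details (the Hessian computation, the tangent-line condition $x_0^2=z_0^2$, and the bijectivity of $C[2]\times C[3]\to C[6]$) that the paper compresses into ``one can check.''
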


\begin{proof}
   The 6-torsion points can be obtained by adding
   3-torsion points and 2-torsion points.
   The 3-torsion points are the flex points,
   which as noted above, are the nine points given.
   As for the 2-torsion points,
   note first that $(1,0,1)$ is a flex. Regarding it as the identity for the group law on $C$,
   the 2-torsion points on $C$ are the lines tangent to $C$ which go through the identity (i.e., through $(1,0,1)$).
   One can check that the required points are the four points given.
   Using the group law on $C$ one can now find all 36 of the 6-torsion points.
\end{proof}

\paragraph{Projections from 6-torsion points.}
   Three of the 6-torsion points are the ramification points
   of the projection,
   and 6 of them correspond to the $n=3$ case.
   The other 27 come from taking a line through a 2-torsion point and one of the nine
   3-torsion points. One then finds the line through each of these 27 points and the point $v_1$.
   This gives 9 lines through $v_1$:
   \be
      &&
      x-(1/2)b^2z,\quad
      x-(1/2)\eps b^2z,\quad
      x-(1/2)\eps^2b^2z
      \\
      &&
      x-bz,\quad
      x-\eps bz,\quad
      x-\eps^2bz
      \\
      &&
      x+z,\quad
      x+\eps z,\quad
      x+\eps^2z
   \ee
%
   The ramification points map to the lines $L_{i}$, and the lines
   above (up to product with a constant, in the same order) can be written as:
   \[
   \begin{aligned}
      (b\eps-1)L_{0}&-\eps(b\eps^2-1)L_{\infty}, &
      (b-1)L_0&-\eps(b\eps-1)L_\infty, &
      (b\eps^2-1)L_0&-\eps(b-1)L_\infty \\
      (b\eps^2-1)L_0&-\eps^2(b\eps-1)L_\infty, &
      (b-1)L_0&-\eps^2(b\eps^2-1)L_\infty,&
      (b\eps-1)L_0&-\eps^2(b-1)L_\infty \\
      L_0&-L_\infty,&
      2 L_0&+ L_\infty,&
      L_0&+2L_\infty
   \end{aligned}
   \]
Now the equianharmonic torsion parameters $\P^1[6]\setminus\P^1[3]$
can be obtained as the ratios
between the coefficients of $L_{\infty}$ and $L_{0}$ in the previous
list. In the same order again:
   \[
   \begin{aligned}
\tau &=-\eps\frac{b\eps^2-1}{b\eps-1},&
(1-\tau)^{-1}&=-\eps\frac{b\eps-1}{b-1},&
1-\tau^{-1}&=-\eps\frac{b-1}{b\eps^2-1},\\
\tau^{-1}&= -\eps^2\frac{b\eps-1}{b\eps^2-1},&
(1-\tau^{-1})^{-1}&=-\eps^2\frac{b\eps^2-1}{b-1},&
1-\tau&=-\eps^2\frac{b-1}{b\eps-1},\\
&-1,&&\frac12,&&2
   \end{aligned}
   \]
As an aside we note that 3 of them are defined over ${\mathbb Q}$;
they form one orbit under the action of $S_3$
generated by $t\mapsto 1-t$ and $t\mapsto t^{-1}$.
The remaining 6 form another orbit, which consists of the roots
of the irreducible invariant polynomial
$P(x)=x^6-3x^5+5x^3-3x+1=(x^2-x-1)^3+2$, with the property that
$\Q[\tau]=\Q[\eps,b]$ is the splitting field of $x^3+2$ (see \cite[page 59]{Cohen}).

\paragraph{The 36 new Halphen cubics.}
   Now here are the 36 cubics we get, normalized to obtain a simple
expression (i.e., the polynomial $C_{v_\tau}$ as given on the list is actually a
scalar multiple of $C_{v_0}+\tau C_{v_\infty}$).

\[
\begin{aligned}
 C_{v_{\tau}}&= (b-\eps)x^3+\eps^2(b-1)y^3+(b\eps-1)z^3 \\
 C_{h_{\tau}}&=x^2y-\eps^2bxy^2-\eps^2bx^2z+y^2z+xz^2-\eps^2byz^2 \\
 C_{\delta_{\tau}}&=x^2y-bxy^2-\eps ^2bx^2z+\eps ^2y^2z+\eps xz^2-\eps byz^2 \\
 C_{\gamma_{\tau}}&=x^2y-\eps ^2bxy^2-bx^2z+\eps y^2z+\eps ^2xz^2-\eps byz^2
\end{aligned}
\]
\[
\begin{aligned}
 C_{v_{(1-\tau)^{-1}}}&= \eps(b\eps-1)x^3+\eps(b-\eps) y^3+(b-1)z^3\\
 C_{h_{(1-\tau)^{-1}}}&= x^2y-\eps bxy^2-\eps bx^2z+y^2z+xz^2-\eps byz^2 \\
 C_{\delta_{(1-\tau)^{-1}}}&=x^2y-\eps ^2bxy^2-\eps bx^2z+\eps ^2y^2z+\eps xz^2-byz^2 \\
 C_{\gamma_{(1-\tau)^{-1}}}&=x^2y-\eps bxy^2-\eps ^2bx^2z+\eps y^2z+\eps ^2xz^2-byz^2\\
 \end{aligned}
\]
\vskip\baselineskip
\[
\begin{aligned}
 C_{v_{1-\tau^{-1}}}&= \eps(b-1) x^3 +(b-\eps^2) y^3+ (b\eps^2-1)z^3 \\
 C_{h_{1-\tau^{-1}}}&= x^2y-bxy^2-bx^2z+y^2z+xz^2-byz^2\\
 C_{\delta_{1-\tau^{-1}}}&=x^2y-\eps bxy^2-bx^2z+\eps ^2y^2z+\eps xz^2-\eps ^2byz^2 \\
 C_{\gamma_{1-\tau^{-1}}}&=x^2y-bxy^2-\eps bx^2z+\eps y^2z+\eps ^2xz^2-\eps ^2byz^2
\end{aligned}
\]
\vskip\baselineskip
\[
\begin{aligned}
 C_{v_{\tau^{-1}}}&= (b-\eps^2) x^3+\eps(b-1) y^3 + (b\eps^2 -1) z^3 \\
 C_{h_{\tau^{-1}}}&=2x^2y-\eps^2b^2xy^2-\eps^2b^2x^2z+2y^2z+2xz^2-\eps^2b^2yz^2\\
 C_{\delta_{\tau^{-1}}}&=2x^2y-b^2xy^2-\eps ^2b^2x^2z+2\eps ^2y^2z+2\eps xz^2-\eps b^2yz^2 \\
 C_{\gamma_{\tau^{-1}}}&=2x^2y-\eps ^2b^2xy^2-b^2x^2z+2\eps y^2z+2\eps ^2xz^2-\eps b^2yz^2 \\
\end{aligned}
\]
\vskip\baselineskip
\[
\begin{aligned}
 C_{v_{(1-\tau^{-1})^{-1}}}&=\eps(b-\eps)x^3+\eps(b\eps-1) y^3+(b-1)z^3\\
 C_{h_{(1-\tau^{-1})^{-1}}}&= 2x^2y-\eps b^2xy^2-\eps b^2x^2z+2y^2z+2xz^2-\eps b^2yz^2\\
 C_{\delta_{(1-\tau^{-1})^{-1}}}&=2x^2y-\eps ^2b^2xy^2-\eps b^2x^2z+2\eps ^2y^2z+2\eps xz^2-b^2yz^2 \\
 C_{\gamma_{(1-\tau^{-1})^{-1}}}&=2x^2y-\eps b^2xy^2-\eps ^2b^2x^2z+2\eps y^2z+2\eps ^2xz^2-b^2yz^2
\end{aligned}
\]
\vskip\baselineskip
\[
\begin{aligned}
 C_{v_{(1-\tau)^{-1}}}&= \eps^2(b-1) x^3+(b-\eps) y^3+(b\eps-1)z^3\\
 C_{h_{(1-\tau)^{-1}}}&= 2x^2y-b^2xy^2-b^2x^2z+2y^2z+2xz^2-b^2yz^2\\
 C_{\delta_{(1-\tau)^{-1}}}&=2x^2y-\eps b^2xy^2-b^2x^2z+2\eps ^2y^2z+2\eps xz^2-\eps ^2b^2yz^2 \\
 C_{\gamma_{(1-\tau)^{-1}}}&=2x^2y-b^2xy^2-\eps b^2x^2z+2\eps y^2z+2\eps ^2xz^2-\eps ^2b^2yz^2
\end{aligned}
\]
\vskip\baselineskip
\[
\begin{aligned}
  C_{v_{-1}}&=x^3-2 y^3+z^3 \\
 C_{h_{-1}}&=x^2y+\eps^2xy^2+\eps^2x^2z+y^2z+xz^2+\eps^2yz^2 \\
 C_{\delta_{-1}}&=x^2y+xy^2+\eps ^2x^2z+\eps ^2y^2z+\eps xz^2+\eps yz^2 \\
 C_{\gamma_{-1}}&=x^2y+\eps xy^2+x^2z+\eps y^2z+\eps^2 xz^2+\eps yz^2
\end{aligned}
\]
\vskip\baselineskip
\[
\begin{aligned}
  C_{v_{\frac{1}2}}&=x^3+y^3-2z^3 \\
 C_{h_{\frac{1}2}}&=x^2y+\eps xy^2+\eps x^2z+y^2z+xz^2+\eps yz^2 \\
 C_{\delta_{\frac{1}2}}&= x^2y+\eps ^2xy^2+\eps x^2z+\eps ^2y^2z+\eps xz^2+yz^2\\
 C_{\gamma_{\frac{1}2}}&= x^2y+\eps xy^2+\eps ^2x^2z+\eps y^2z+\eps ^2xz^2+yz^2
\end{aligned}
\]
\vskip\baselineskip
\[
\begin{aligned}
  C_{v_{2}}&= 2x^3-y^3-z^3\\
 C_{h_{2}}&=x^2z+xz^2+x^2y+z^2y+xy^2+zy^2 \\
 C_{\delta_{2}}&=x^2y+\eps xy^2+x^2z+\eps ^2y^2z+\eps xz^2+\eps ^2yz^2 \\
 C_{\gamma_{2}}&=x^2y+xy^2+\eps x^2z+\eps y^2z+\eps ^2xz^2+\eps ^2yz^2 \\
\end{aligned}
\]

As explained in the proof of Theorem \ref{RU_on_plane}, the singular
points of the configuration $H_m$ are exactly the $3m$-torsion points on
each of its components, translated by one (arbitrary) base point of the
pencil to which it belongs. In the case of $H_2$, these 6-torsion points
can be computed for each $C_{t_s}$ by the same method above; we leave
the details to the interested reader.

\section{The linear series of the Roulleau-Urz\'ua map}
\label{sec:linseries}

Recall now the construction by Roulleau and Urz\'ua
of their planar configuration of cubics (which we will eventually
show to be equal to the Halphen cubics of order $m$).
   Let as before $T=\C/\Z[\zeta]$, where $\zeta=e^{2\pi i/6}$, be the equianharmonic
   elliptic curve, and let $p_i=\frac i3+\frac i3\zeta$ for $0\le i\le 2$
   be the  $3$ fixed points of multiplication by $\zeta^2$.
   Let $A$ be the abelian surface $A=T\times T$, and denote
   $\sigma:A\to A$ the induced automorphism
   defined by $(x,y)\mapsto (\zeta^2 x,\zeta^2 y)$, which has 9 fixed points,
   namely
    \be
      p_{ij}=(\tfrac i3+\tfrac i3\zeta,\ \tfrac j3+\tfrac j3\zeta)
      \qquad (0\le i,j\le 2),
   \ee
  so that $p_{ij}=(p_i,p_j)$.

   Divisors of particular importance on $A$ are
   $V=0\times T$, $H=T\times 0$,
   the diagonal $\Delta$ and the graph $\Gamma$
   of the complex multiplication by $\zeta$.
In fact, these curves span the N\'eron-Severi group of $A$.
   Translating $V,H,\Delta,\Gamma$ by the fixed points $p_{ij}$
   gives twelve curves: each of the fixed points is on four of the translates
   and each translate contains 3 of the fixed points, as suggested in Figure \ref{FigureOnA}.
   Of the diagonal lines, only $\Delta$ and $\Gamma$ can be shown properly as going through
   three of the points $p_{ij}$, but the line through points $p_{01}$ and $p_{10}$ also goes through
   point $p_{22}$, and in general if $(i_1,j_1)+(i_2,j_2)+(i_3,j_3)$ add up as vectors in $\mathbb Z_3^2$ to
   $(0,0)$, then the points $p_{i_1j_1}$, $p_{i_2j_2}$ and $p_{i_3j_3}$,  are collinear.
   Moreover, the points of intersection actually occur only at the points $p_{ij}$,
   contrary to how it might look in the drawing.

\begin{figure}[htbp]
\begin{center}
\begin{tikzpicture}[line cap=round,line join=round,>=triangle 45,x=2.0cm,y=2.0cm]
\clip(-0.5759937642812904,-0.6736429837486906) rectangle (2.587531552309566,2.545012919896644);
\draw [domain=-0.8759937642812904:3.087531552309566] plot(\x,{(-0.0-2.0*\x)/-2.0});
\draw [domain=-0.8759937642812904:3.087531552309566] plot(\x,{(--1.0--1.0*\x)/1.0});
\draw [domain=-0.8759937642812904:3.087531552309566] plot(\x,{(-1.0--1.0*\x)/1.0});
\begin{scriptsize}
\draw [color=white, fill=white] (0.5,0.5) circle (3pt);
\draw [color=white, fill=white] (0.5,1.5) circle (3pt);
\draw [color=white, fill=white] (1.5,0.5) circle (3pt);
\draw [color=white, fill=white] (1.5,1.5) circle (3pt);
\end{scriptsize}
\draw (0.0,-0.6736429837486906) -- (0.0,2.545012919896644);
\draw (1.0,-0.6736429837486906) -- (1.0,2.545012919896644);
\draw (2.0,-0.6736429837486906) -- (2.0,2.545012919896644);
\draw [domain=-0.8759937642812904:3.087531552309566] plot(\x,{(-0.0-0.0*\x)/2.0});
\draw [domain=-0.8759937642812904:3.087531552309566] plot(\x,{(--2.0-0.0*\x)/2.0});
\draw [domain=-0.8759937642812904:3.087531552309566] plot(\x,{(--4.0-0.0*\x)/2.0});
\draw [domain=-0.8759937642812904:3.087531552309566] plot(\x,{(--4.0-2.0*\x)/2.0});
\draw [domain=-0.8759937642812904:3.087531552309566] plot(\x,{(--3.0-1.0*\x)/1.0});
\draw [domain=-0.8759937642812904:3.087531552309566] plot(\x,{(--1.0-1.0*\x)/1.0});
\draw (0.01,0.0) node[anchor=north west] {$p_{22}$};
\draw (0.09,1.22) node[anchor=north west] {$p_{20}$};
\draw (0.01,2.247979157323769) node[anchor=north west] {$p_{21}$};
\draw (1.12,.05) node[anchor=north west] {$p_{02}$};
\draw (1.12,1.05) node[anchor=north west] {$p_{00}$};
\draw (1.11,2.22) node[anchor=north west] {$p_{01}$};
\draw (2.12,.05) node[anchor=north west] {$p_{12}$};
\draw (2.12,1.05) node[anchor=north west] {$p_{10}$};
\draw (2.01,2.0) node[anchor=north west] {$p_{11}$};
\draw (.95,0.6) node[anchor=north west] {$V$};
\draw (0.35,1) node[anchor=north west] {$H$};
\draw (1.2,1.35) node[anchor=north west] {$\Delta$};
\draw (0.65,1.52) node[anchor=north west] {$\Gamma$};
\begin{scriptsize}
\draw [fill=black] (0.0,0.0) circle (1.5pt);
\draw [fill=black] (1.0,0.0) circle (1.5pt);
\draw [fill=black] (2.0,0.0) circle (1.5pt);
\draw [fill=black] (0.0,1.0) circle (1.5pt);
\draw [fill=black] (1.0,1.0) circle (1.5pt);
\draw [fill=black] (2.0,1.0) circle (1.5pt);
\draw [fill=black] (0.0,2.0) circle (1.5pt);
\draw [fill=black] (1.0,2.0) circle (1.5pt);
\draw [fill=black] (2.0,2.0) circle (1.5pt);
\end{scriptsize}
\end{tikzpicture}
\caption{The curves $V, H, \Delta, \Gamma\subset A$ and their translates by the fixed points $p_{ij}$}
\label{FigureOnA}
\end{center}
\end{figure}
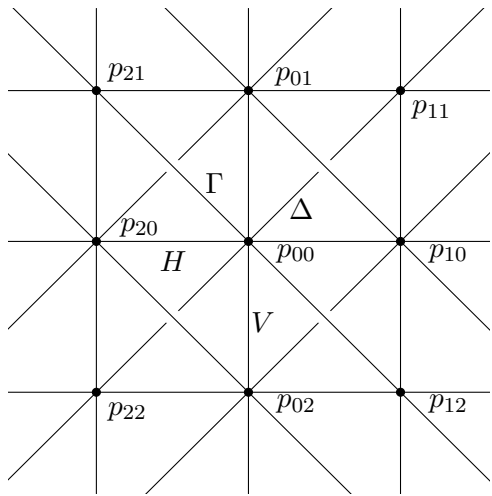

  Now let $B\to A$ be the blow up at the nine points $p_{ij}$.
  Since these are fixed points for $\sigma$, the automorphism lifts to $B$.
  By a slight abuse of notation, we denote
  the lift of $\sigma$ to $B$ again by $\sigma$ (and
  we denote the total transforms of $V,H,\Delta,\Gamma$ to $B$ with the same letters).
  Thus we have the quotient $B\to X=B/\generated{\sigma}$.
  Moreover, because $\sigma$ acts diagonally on $A$, $\sigma$ fixes tangent directions
  at $p_{00}$ (and hence also at each fixed point $p_{ij}$), so the fixed points for $\sigma$ acting on $B$
  are exactly the points of the exceptional curves for the nine points $p_{ij}$. Thus the
  ramification locus for the quotient map $B\to X$ is the union of these nine exceptional curves.

   Roulleau and Urz\'ua show in \cite{RU:Chern}
   that $X$ is smooth and rational, and that under the quotient $B\to X$
   the images of the 12 curves obtained from $V,H,\Delta,\Gamma$
   by translation are disjoint $(-1)$-curves
   whose contraction gives a birational morphism $X\to \P^2$,
   representing $X$ as
   the blow-up of $\P^2$ at the twelve points of the dual Hesse configuration.
   %
   %
   So we
   have a diagram
   \be
      \renewcommand\arraystretch{1.5}
      \begin{array}{ccc}
         B=\Bl_{9}(A)     & \longrightarrow & A=T\times T \\
         \big\downarrow   &
         	\searrow\!\!\!\raise7pt\hbox{$\scriptstyle\varphi$}            & \\
         X=\Bl_{12}(\P^2) & \longrightarrow & \P^2 \hfill
      \end{array}
   \ee
   where the
   vertical map is of degree 3
   and its branch locus
   is the union of the
   nine exceptional curves for the upper horizontal map, whose images
   are the 9 harmonic polar lines of the dual Hesse configuration.

   In this section we describe the induced map
   $\varphi:B\to\P^2$
   in terms of linear series. Then, using the action of the theta
   group, we determine the coordinates of the images of the 12
   translates of $V,H,\Delta,\Gamma$, which will justify
   the choice of indices $v, h, \delta, \gamma$ in the previous section.

\begin{theorem}\label{LinSeriesThm}\
   \begin{itemize}
   \item[\rm a)]
      The morphism $\varphi:B\to\P^2$ is the map $\varphi_L$
      defined by the
      complete linear series $|L|$ associated with the line bundle
      \be
         L=V+H+\Delta+\Gamma-E
      \ee
      where $E$ is the sum of the nine exceptional divisors $E_{ij}$
      of the blow-up $B\to A$.
   \item[\rm b)]
      Consider the translates $V_i=V+(p_i,0)=V+p_{i0}$.
      Then there are elliptic curves
      $N_{01}, N_{02}, N_{12} \subset A$ such that the divisors
      \be
         D_0 := V_1+V_2+N_{12} \\
         D_1 := V_0+V_2+N_{02} \\
         D_2 := V_0+V_1+N_{01}
      \ee
      belong to the linear series $|L|$.
      If we define the map $\varphi_L:B\to\P^2$
      by suitably scaled sections corresponding to the
      divisors $D_0,D_1,D_2$, then the images of the 12 translates
      of $V,H,\Delta,\Gamma$ are the 12 Hesse dual points
      $v_u, h_u,\delta_u,\gamma_u$.
   \end{itemize}
\end{theorem}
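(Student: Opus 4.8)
The plan is to exploit the factorization $\varphi\colon B\xrightarrow{\pi} X\to\P^2$ of the displayed diagram, where $\pi$ is the quotient by $\langle\sigma\rangle$ and $X\to\P^2$ contracts the twelve disjoint $(-1)$-curves to the vertices of the dual Hesse configuration. Writing $\ell$ for the pullback to $X$ of a line, this gives $\varphi^*\O_{\P^2}(1)=\pi^*\ell$, so part a) reduces to identifying $\pi^*\ell$ with $L$ and computing $h^0$. First I would pin down the numerical class. Reading off which fixed points lie on each curve (using $\zeta p_1=p_2$, $\zeta p_2=p_1$), the total transforms are $V=\tilde V+E_{00}+E_{01}+E_{02}$, $H=\tilde H+E_{00}+E_{10}+E_{20}$, $\Delta=\tilde\Delta+E_{00}+E_{11}+E_{22}$, $\Gamma=\tilde\Gamma+E_{00}+E_{12}+E_{21}$, which share only $E_{00}$; hence $L=\tilde V+\tilde H+\tilde\Delta+\tilde\Gamma+3E_{00}$. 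Using the projection formula $\pi^*\ell\cdot C=\ell\cdot\pi_*C$, together with the facts that $\pi$ sends each strict-transform translate to a multiple of an image $(-1)$-curve $F$ with $\ell\cdot F=0$ and sends each $E_{ij}$ isomorphically onto a branch curve $\bar E_{ij}$ (a harmonic polar, with $\ell\cdot\bar E_{ij}=1$), I would verify that $\pi^*\ell$ and $L$ agree against the numerical generators $V,H,\Delta,\Gamma,E_{ij}$ of $\mathrm{NS}(B)$. In particular $L^2=3=(\pi^*\ell)^2$ and $L\cdot\tilde V=0$, confirming that the translates are contracted.

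To get $h^0=3$ I would work on the cover. The branch divisor $\sum_{ij}\bar E_{ij}$ is a sum of nine harmonic polars, each the strict transform of a line through four of the twelve vertices; since every vertex lies on three polars, its class is $9\ell-3\sum_kF_k=-3K_X$. Thus $\pi$ is a cyclic triple cover with $\pi_*\O_B=\O_X\oplus\omega_X\oplus\omega_X^{\otimes2}$, so
\[ H^0(B,\pi^*\ell)=H^0(X,\ell)\oplus H^0(X,\ell+K_X)\oplus H^0(X,\ell+2K_X). \]
The first summand is the three-dimensional system of lines, while the other two vanish because $(\ell+K_X)\cdot\ell=-2<0$ and $(\ell+2K_X)\cdot\ell=-5<0$ with $\ell$ nef. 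Hence $h^0(B,\varphi^*\O_{\P^2}(1))=3$, and once the linear (not merely numerical) equivalence $L\sim\pi^*\ell$ is in hand, part a) follows. Rather than chase $\mathrm{Pic}^0(B)\cong\hat A$ directly, I would obtain this equivalence from part b), where explicit members of $|L|$ are exhibited as preimages of lines.

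For part b) the heart is constructing the $D_i$. Since $D_0-V_1-V_2$ has numerical class $H+\Delta+\Gamma-V$, with self-intersection $0$ and $V\cdot(H+\Delta+\Gamma-V)=3$, I would seek $N_{12}$ as an elliptic curve in this class — necessarily a translate of an elliptic subtorus meeting $V$ in three points, governed by a prime of norm $3$ in $\Z[\zeta]$ — chosen through the correct base points so that $V_1+V_2+N_{12}\sim L$. The theorem of the square and the explicit list of $3$-torsion points make such linear equivalences verifiable, and they simultaneously supply the equivalence $L\sim\pi^*\ell$ needed above. With $D_0,D_1,D_2$ fixed as the three coordinate sections, the images of the vertical translates drop out combinatorially: $V_0\subset D_1\cap D_2$, $V_1\subset D_0\cap D_2$, $V_2\subset D_0\cap D_1$, forcing their images to be $(1{:}0{:}0),(0{:}1{:}0),(0{:}0{:}1)$, i.e.\ the vertices $v_0,v_1,v_\infty$ of $T_v$.

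The nine remaining translates are not components of $D_0,D_1,D_2$ (each $H$-, $\Delta$-, $\Gamma$-translate meets $V$ once, whereas the $N_{ij}$ are $3$-sections), so their images avoid the three coordinate lines, consistent with $h_u,\delta_u,\gamma_u$ having no vanishing coordinate. Since the four directions enter $L=V+H+\Delta+\Gamma-E$ symmetrically, I would produce the analogous special members $H_j+H_k+N^H_{jk}$ and their $\Delta$- and $\Gamma$-counterparts; each of the twelve translates then lies on exactly two special members, and its image is the intersection in $\P^2$ of the two lines they cut out. Expressing all these members in the single basis $s_0,s_1,s_2$ is where the theta group $\mathcal G(\mathcal L)$ acting on $H^0\cong\C^3$ enters: its Schrödinger representation describes how the defining theta functions transform under the $3$-torsion translations $p_{ij}$, making the twelve sets of coordinates explicitly computable. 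I expect this computation — simultaneously fixing the curves $N_{ij}$, verifying the linear equivalences $D_i\sim L$, and matching the twelve images to the labelled points $v_u,h_u,\delta_u,\gamma_u$ — to be the main obstacle; by comparison the class identification and cohomology vanishing of part a) are formal.
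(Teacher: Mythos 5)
Your proposal follows the paper's overall architecture (identify the class of $L$, compute $h^0=3$, construct the $N_{ij}$ and the divisors $D_i$, then use the torsion translations acting on $\P(H^0)$), and one ingredient is genuinely nicer than the paper's: computing $h^0(B,\pi^*\ell)=3$ from the eigensheaf decomposition $\pi_*\O_B=\O_X\oplus\omega_X\oplus\omega_X^{\otimes 2}$ (the branch divisor being $\sim -3K_X$) replaces the paper's route via Riemann--Roch on $A$ (giving $h^0(A,V+H+\Delta+\Gamma)=6$), very ampleness, and an explicitly constructed divisor through two but not a third fixed point. However, there is a genuine gap exactly at the point you flag yourself: the linear (not merely numerical) equivalence $L\sim\pi^*\ell$. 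Deferring it to part b) is circular as written. Your part b) steps --- expressing the special members $H_j+H_k+N^H_{jk}$ in the basis $s_0,s_1,s_2$ and intersecting ``the lines they cut out'' --- presuppose that members of $|L|$ are pullbacks of lines (i.e.\ part a)), or at least that $h^0(B,L)=3$; but your cover argument computes $h^0$ of $\pi^*\ell$, not of $\O_B(L)$, and these are not yet known to be isomorphic: their difference lives in the two-dimensional $\Pic^0(B)\cong\Pic^0(A)$, so numerical agreement decides nothing. The theorem of the square gives you only $D_0=V_1+V_2+N_{12}\in|L|$; to close the loop you would still need to prove that the strict transform $V_1'+V_2'+N_{12}'$ is the full divisor-theoretic preimage of a line under $\varphi$ (e.g.\ a degree count showing $\varphi(N_{12})$ is a line through the images of $V_1,V_2$, since $N_{12}$ is $\sigma$-invariant and $L\cdot N_{12}'=3$, followed by the observation that the pullback of that line minus $V_1'+V_2'+N_{12}'$ is effective and numerically trivial, hence zero). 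None of this appears in your text. The paper sidesteps all of it in one stroke: it shows the image of $E_{00}$ is a line $C$ (from $\varphi^*(E_{00}')=3E_{00}$ one gets $(E_{00}')^2=-3$, whence $C^2=1$), and the pullback of this particular line is the explicit divisor $V'+H'+\Delta'+\Gamma'+3E_{00}$, whose class is $V+H+\Delta+\Gamma-E$; linear equivalence is then automatic, and part b) may legitimately quote part a).

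The second gap is that the actual content of part b) --- that the nine images of the $H_i,\Delta_i,\Gamma_i$ are precisely the points $h_u,\delta_u,\gamma_u$ --- is left by you as an acknowledged ``main obstacle,'' to be settled by explicit theta functions. That is where the work lies, and the paper shows no theta constants are needed. Since the nine fixed points lie in $K(M)$ (because $\phi_M\cdot p_{ij}\in(\Z+\Z\zeta)\times(\Z+\Z\zeta)$), each translation $t_{p_{ij}}$ induces a matrix $M_{ij}\in\PGL_3$ acting on the image $\P^2$; after rescaling the $s_i$ one may take $M_{10}$ to be the standard cyclic permutation matrix; and since $t_{p_{10}}$ preserves each $H_i$, $t_{p_{11}}$ each $\Delta_i$, and $t_{p_{12}}$ each $\Gamma_i$, the images of these curves must be eigenvectors of $M_{10}$, of $M_{11}=M_{10}M_{01}$, and of $M_{12}=M_{01}^2M_{10}$ respectively, which are exactly the twelve dual Hesse points. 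So the tool you name (the Schr\"odinger-type representation) is the right one, but your proof is incomplete until you either carry out that computation or adopt this eigenvector argument.
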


   The proof of Theorem \ref{LinSeriesThm} is split into several intermediate
   steps filling the rest of the present section.
   We start by showing:

\begin{proposition}
   The morphism $\varphi:B\to\P^2$ is the map defined by the
   complete linear series
   \be
      |V+H+\Delta+\Gamma-E|
   \ee
   where $E$ is as above.
\end{proposition}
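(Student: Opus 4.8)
The plan is to exploit the factorization $\varphi=\pi'\circ q$, where $q\colon B\to X$ is the quotient by $\langle\sigma\rangle$ and $\pi'\colon X\to\P^2$ is the birational morphism contracting the twelve $(-1)$-curves, and to compute the class $L:=\varphi^*\O_{\P^2}(1)=q^*h$ in $\Pic(B)$, where $h:=\pi'^*\O_{\P^2}(1)$ is the line class pulled back to $X$. I would prove first that $L\sim V+H+\Delta+\Gamma-E$, and then that $h^0(B,L)=3$; the latter forces the three coordinate sections pulled back by $\varphi$ to exhaust $H^0(B,L)$, so that $\varphi=\varphi_{|L|}$ is indeed the map of the \emph{complete} system.

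For the class I would argue as follows. Since $A$ is abelian, blowing up the nine points gives $K_B=E$. As $q$ is a cyclic triple cover, totally ramified with ramification index $3$ exactly along the nine curves $E_{ij}$, Hurwitz yields $K_B=q^*K_X+2E$, whence $q^*K_X=-E$. Writing $\mathcal E_1,\dots,\mathcal E_{12}$ for the exceptional curves of $\pi'$, I would check that each is $\sigma$-invariant (because $\sigma$ fixes the $p_{ij}$ and preserves each of the twelve translates of $V,H,\Delta,\Gamma$), that $q$ is unramified along the corresponding strict transform $\tilde C_k\subset B$, and hence $q^*\mathcal E_k=\tilde C_k$; the self-intersection check $(q^*\mathcal E_k)^2=3\,\mathcal E_k^2=-3=\tilde C_k^2$ (the last equality since $C_k^2=0$ on $A$ and $C_k$ meets three blown-up points) pins the multiplicity to $1$. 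Summing over $k$, using that the sum of the three translates in each family is linearly equivalent to three times the base curve (this uses that the shift vectors are $3$-torsion points forming a subgroup, so $V_0+V_1+V_2\sim 3V$, and likewise for $H,\Delta,\Gamma$) together with the incidence that each $E_{ij}$ lies on exactly four of the twelve curves, I obtain $\sum_k q^*\mathcal E_k=\sum_k\tilde C_k\sim 3(V+H+\Delta+\Gamma)-4E$. Pulling back $K_X=-3h+\sum_k\mathcal E_k$ and substituting the two relations $q^*K_X=-E$ and the displayed sum gives $-3\,q^*h=3E-3(V+H+\Delta+\Gamma)$, i.e. $q^*h=V+H+\Delta+\Gamma-E=L$.

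For completeness I would use the eigensheaf decomposition of the cyclic cover. With $M$ the line bundle defining $q$, so that $3M\sim\sum_{i=1}^9\bar\ell_i$ is the branch divisor of the nine harmonic polars, one computes $M\sim 3h-\sum_k\mathcal E_k=-K_X$ and $q_*\O_B=\O_X\oplus M^{-1}\oplus M^{-2}$, so that
\[
h^0(B,L)=h^0(X,h)+h^0(X,h-M)+h^0(X,h-2M).
\]
The first term equals $3$ by the projection formula for the birational $\pi'$. For the other two, intersecting against the nef class $h$ gives $h\cdot(h-jM)=1-3j<0$ for $j=1,2$, so both groups vanish and $h^0(B,L)=3$. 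Since $\varphi$ is dominant, the pullback $H^0(\P^2,\O(1))\hookrightarrow H^0(B,L)$ is injective onto a $3$-dimensional subspace, which therefore is all of $H^0(B,L)$; hence $\varphi=\varphi_{|L|}$.

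The main obstacle is the bookkeeping in the class computation: correctly identifying $q^*\mathcal E_k$ with the \emph{reduced} strict transform $\tilde C_k$ rather than a multiple, and assembling the identity $\sum_k\tilde C_k\sim 3(V+H+\Delta+\Gamma)-4E$ from the dual Hesse incidences together with the group-law relations $V_0+V_1+V_2\sim 3V$ and its analogues. Once these are in place, both the Hurwitz input for the class and the cyclic-cover vanishing for completeness are routine.
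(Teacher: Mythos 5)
Your identification of the class $\varphi^*\O_{\P^2}(1)$ contains a genuine gap at its final step: from $3\,q^*h\sim 3\,(V+H+\Delta+\Gamma-E)$ in $\Pic(B)$ you conclude $q^*h\sim V+H+\Delta+\Gamma-E$, but division by $3$ is not legitimate in $\Pic(B)$. Indeed $\Pic^0(B)\cong\Pic^0(A)\cong\widehat{A}$ is an abelian surface, whose $3$-torsion subgroup has order $81$; your (correct) inputs --- $K_B=E$, Hurwitz giving $q^*K_X=-E$, the identification $q^*\mathcal E_k=\tilde C_k$, and the theorem-of-the-square relations $V_0+V_1+V_2\sim 3V$, etc. --- only show that $q^*h-(V+H+\Delta+\Gamma-E)$ is a numerically trivial $3$-torsion class, not that it vanishes. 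Note the contrast with the divisions you perform on $X$ (e.g.\ from $3M\sim -3K_X$ to $M\sim -K_X$): those are valid precisely because $X$ is rational, so $\Pic(X)$ is torsion free, whereas $B$ dominates an abelian surface and this fails. Symmetry does not rescue the step either: both classes are $\sigma$-invariant, but the $\sigma$-invariant $3$-torsion classes in $\widehat{A}$ form a nontrivial group (of order $9$). The gap is exactly what the paper's extra geometric input is for: the paper shows, by a self-intersection computation, that the image in $\P^2$ of $E_{00}'=q(E_{00})$ is a \emph{line} $C$, and then computes the actual divisor $\varphi^*C=V'+H'+\Delta'+\Gamma'+3E_{00}$, which equals $V+H+\Delta+\Gamma-E$ in $\Pic(B)$ on the nose, with no division. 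Alternatively, you could repair your argument by observing that $V+H+\Delta+\Gamma-E$ is the pullback of the class of $D'=E_{00}'+V''+H''+\Delta''+\Gamma''$ on $X$, that $q^*\colon\Pic(X)\to\Pic(B)$ is injective (its kernel is torsion, and $\Pic(X)$ is torsion free), and then carry out the division by $3$ inside $\Pic(X)$ --- but some such additional step is indispensable.

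Your completeness argument, on the other hand, is correct and genuinely different from the paper's. You compute $h^0(B,q^*h)=h^0(X,h)+h^0(X,h+K_X)+h^0(X,h+2K_X)=3+0+0$ via the eigensheaf decomposition of the cyclic triple cover (the identification $M=-K_X$ being valid since $\Pic(X)$ is torsion free), whereas the paper computes $h^0(A,V+H+\Delta+\Gamma)=6$ by Riemann--Roch and then shows that the nine points $p_{ij}$ impose three independent conditions, using very ampleness of $V+H+\Delta+\Gamma$ and the translation lemma (Lemma~2.3 of the paper). Your route is cleaner, avoids that auxiliary lemma, and --- since it computes $h^0$ of $\varphi^*\O_{\P^2}(1)$ directly --- it stands independently of the gap above; only the identification of that bundle with $V+H+\Delta+\Gamma-E$ needs to be repaired.
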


\begin{proof}
With respect to the blow up $B\to A$, the proper transforms of the curves
$V,H,\Delta,\Gamma$ are $V'=V-E_{02}-E_{00}-E_{01}$, $H'=H-E_{20}-E_{00}-E_{10}$,
$\Delta'=\Delta-E_{22}-E_{00}-E_{11}$ and $\Gamma'=\Gamma-E_{21}-E_{00}-E_{12}$,
where $E_{ij}$ is the exceptional curve for the blow up of $p_{ij}$.
These curves are mutually disjoint and meet $E_{00}$ transversely.
Since $V', H', \Delta'$ and $\Gamma'$ are preserved curvewise by $\sigma$
and $E_{00}$ is fixed pointwise, the images $V'', H'', \Delta''$ and $\Gamma''$ of $V', H', \Delta'$ and $\Gamma'$
under the quotient $B\to X$ are disjoint and meet the image $E_{00}'$ of $E_{00}$
transversely. Since $V'', H'', \Delta''$ and $\Gamma''$ are exceptional curves
which map to points under $X\to\P^2$, $E_{00}'$ maps to a smooth plane rational curve $C$, hence
of self-intersection $C^2=(E_{00}'+V''+H''+\Delta''+\Gamma'')^2 = (E_{00}')^2+4(2)+4(-1)$.
But $B\to X$ is a triple cover, so $(\varphi^*(E_{00}'))^2=3(E_{00}')^2$, and
has triple ramification along $E_{00}$, so $\varphi^*(E_{00}')=3E_{00}$. Thus
$-9=(3E_{00})^2=3(E_{00}')^2$, so $(E_{00}')^2=-3$ and $C^2=1$, hence $C$ is a line.

The pullback of $C$ to $B$ is $V'+H'+\Delta'+\Gamma'+3E_5'=V+H+\Delta+\Gamma-E$,
which we denote by $L$.
I.e., the map $\varphi$ is defined by a 3 dimensional
   subspace of $H^0(B,L)$, and the argument so far shows that
   $L=V+H+\Delta+\Gamma-E$.
   We will now prove that
   $h^0(B,L)=3$, which then implies that $\varphi$ is defined by
   the complete linear series $|L|$.

   First, we have $(V+H+\Delta+\Gamma)^2=12$, and therefore by
   Riemann-Roch on $A$ we get $h^0(A,V+H+\Delta+\Gamma)=6$. It
   is therefore enough to find three fixed points $q_1,q_2,q_3$ of $\sigma$
   that impose independent conditions on $|V+H+\Delta+\Gamma|$,
   i.e., such that there is a divisor in the linear series
   $|V+H+\Delta+\Gamma|$ passing through $q_1$ and $q_2$, but
   not through $q_3$ (this suffices since the divisor
   $V+H+\Delta+\Gamma$ is very ample
   by \cite[Theorem~2.3]{BS:On-tensor-products}).
   Consider to this end the point
   $x=(\frac13,0)$ on $A$.
   Lemma~\ref{lemma:matching-translation} implies that there is
   a point $z\in A$ such that the divisor
   $t^*_xV+H+t^*_z(\Delta+\Gamma)$ belongs to the linear series
   $|V+H+\Delta+\Gamma|$. Let $q_1$ and $q_2$ be any two of the
   three fixed points lying on $H$. Clearly none of the nine
   fixed points lies on $t^*_xV$, and there can be at most five
   of them on $t^*_z(\Delta+\Gamma)$. Therefore there exists a
   fixed point $q_3$ that lies neither on $H$ nor on
   $t^*_z(\Delta+\Gamma)$. The triple of points $q_1,q_2,q_3$
   thus satisfies the required condition.
\end{proof}

\begin{lemma}\label{lemma:matching-translation}
   For every pair of points $x,y\in A$ there exists a unique
   point $z\in A$ such that
   \be
      t^*_xV+t^*_yH+t^*_z(\Delta+\Gamma)
      \linequiv V+H+\Delta+\Gamma
   \ee
   The analogous statement holds for any permutation of the
   curves $V,H,\Delta,\Gamma$.
\end{lemma}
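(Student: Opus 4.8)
The plan is to translate the asserted linear equivalence into a single equation in $\Pic^0(A)$ and then solve it. Recall that for any divisor $D$ on the abelian surface $A$ the assignment $x\mapsto \mathrm{cl}(t_x^*D-D)$ defines a homomorphism $\phi_D\colon A\to\Pic^0(A)$, which is additive in $D$ and depends only on the algebraic equivalence class of $D$ (this is the content of the theorem of the square). Writing $t_x^*D\linequiv D+\phi_D(x)$, the equivalence to be established becomes equivalent to
\[
\phi_V(x)+\phi_H(y)+\phi_{\Delta+\Gamma}(z)=0 \quad\text{in }\Pic^0(A).
\]
Thus a point $z$ with the required property exists and is unique precisely when $\phi_{\Delta+\Gamma}$ is an isomorphism, in which case $z=\phi_{\Delta+\Gamma}^{-1}\bigl(-\phi_V(x)-\phi_H(y)\bigr)$.

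The heart of the argument is therefore to show that $\phi_{\Delta+\Gamma}$ is an isomorphism. On an abelian surface $\phi_D$ is an isogeny of degree $\chi(\mathcal O_A(D))^2=(D^2/2)^2$ whenever $D$ is non-degenerate, and it is an isomorphism exactly when $D^2=2$. I would compute $(\Delta+\Gamma)^2=\Delta^2+2\,\Delta\cdot\Gamma+\Gamma^2$. Since $\Delta$ and $\Gamma$ are the graphs of the endomorphisms $\mathrm{id}$ and multiplication by $\zeta$, they are abelian subvarieties of $A$ and hence $\Delta^2=\Gamma^2=0$. The only substantive term is $\Delta\cdot\Gamma$, which counts the points $t\in T$ with $t=\zeta t$, i.e.\ the kernel of $\zeta-1$; as $|\zeta-1|^2=1$, the endomorphism $\zeta-1$ is a unit in $\Z[\zeta]$, so $\Delta\cdot\Gamma=\deg(\zeta-1)=1$. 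Thus $(\Delta+\Gamma)^2=2$, whence $\Delta+\Gamma$ is non-degenerate and $\phi_{\Delta+\Gamma}$ is an isomorphism, giving existence and uniqueness of $z$.

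For the statement with the roles of $V,H,\Delta,\Gamma$ permuted, the same reduction applies with $\Delta+\Gamma$ replaced by the sum of whichever two curves carry the common translation $t_z^*$. A direct check shows that all six pairwise intersection numbers among $V,H,\Delta,\Gamma$ equal $1$ (each pair meets only at the origin, and $\Delta\cdot\Gamma=1$ as above), while each of the four curves has self-intersection $0$. Hence every sum of two distinct curves among the four has self-intersection $2$ and defines a principal polarization, so the associated $\phi$ is again an isomorphism and the argument goes through verbatim.

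The main obstacle I anticipate is the intersection-theoretic computation pinning down $\Delta\cdot\Gamma=1$: this is exactly where the equianharmonic structure of $T=\C/\Z[\zeta]$ enters, through the fact that $\zeta-1$ is a unit in $\Z[\zeta]$. Once $(\Delta+\Gamma)^2=2$ is established, the passage from the isomorphism $\phi_{\Delta+\Gamma}$ to the existence and uniqueness of $z$ is purely formal.
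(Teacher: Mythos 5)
Your proposal is correct and follows essentially the same route as the paper: both reduce the linear equivalence to the equation $\phi_V(x)+\phi_H(y)+\phi_{\Delta+\Gamma}(z)=0$ in $\Pic^0(A)$ and then invoke the fact that $(\Delta+\Gamma)^2=2$, so $\Delta+\Gamma$ is a principal polarization and $\phi_{\Delta+\Gamma}$ is an isomorphism (the paper cites Birkenhake--Lange for this). The only difference is that you additionally verify $\Delta\cdot\Gamma=1$ via the unit $\zeta-1\in\Z[\zeta]$ and check the pairwise intersection numbers needed for the permuted cases, details the paper's proof takes for granted.
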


\begin{proof}
   Consider the homomorphism of groups
   \be
      \Phi:A\times A\times A & \to     & \Pic^0(A) \\
      (x,y,z)                & \mapsto & t^*_xV+t^*_yH+t^*_z(\Delta+\Gamma)-(V+H+\Delta+\Gamma)
   \ee
   For every pair $(x,y)\in A\times A$, the map
   $\Phi(x,y,\cdot)$ is a translate of the canonical
   homomorphism $A\to\Pic^0(A)$, $z\mapsto
   t^*_z(\Delta+\Gamma)-(\Delta+\Gamma)$, associated with the
   line bundle $\Delta+\Gamma$. Since this line bundle
   is of self-intersection 2, it
   gives a
   principal polarization and therefore
   its canonical homomorphism is in fact
   an isomorphism (see \cite[Prop.~2.4.9]{BL:CAV}) and thus the
   intersection $\ker\Phi\cap(\set{(x,y)}\times A)$ consists of
   exactly one point.
\end{proof}

\begin{proposition}
   The map $\psi$ that assigns
   to given points $x,y\in A$ the point $z$ as in the preceding
   lemma is given by
   \be
      \psi:A\times A      & \to     & A \\
      ((x_1,x_2),(y_1,y_2)) & \mapsto & \big(-2x_1-(1+\overline\zeta)y_2,\ -(1+\zeta)x_1-2y_2\big)
   \ee
\end{proposition}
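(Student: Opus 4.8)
The plan is to reduce the statement to an explicit computation with homomorphisms to the dual abelian variety. Recall that for a line bundle $L$ on $A$ the assignment $a\mapsto \phi_L(a)=[t_a^*L-L]$ is a group homomorphism $\phi_L\colon A\to\Pic^0(A)$, additive in $L$, so the condition of Lemma \ref{lemma:matching-translation} reads $\phi_V(x)+\phi_H(y)+\phi_{\Delta+\Gamma}(z)=0$ in $\Pic^0(A)$. Since $\Delta+\Gamma$ has self-intersection $2$ it is a principal polarization, so $\phi_{\Delta+\Gamma}$ is an isomorphism (this is exactly what the proof of the lemma uses); hence $z=-\phi_{\Delta+\Gamma}^{-1}\bigl(\phi_V(x)+\phi_H(y)\bigr)$ is determined, and it remains to make the three homomorphisms and the inverse explicit. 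Identifying $\Pic^0(T)\cong T$ by the principal polarization $\phi_{(o)}$ and $\Pic^0(A)\cong\hat T\times\hat T$ by the two projections, I would represent each $\phi_L$ by its Hermitian form on $\C^2$, normalised by the principal polarization of the factor $T=\C/\mathbb{Z}[\zeta]$.

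Next I would compute $\phi_V$ and $\phi_H$. Because $V=0\times T$ is a fibre of the first projection, $t_x^*V$ is the fibre over $-x_1$, so $\phi_V(x)$ depends only on $x_1$ and lies in the first factor of $\Pic^0(A)$; symmetrically $\phi_H(y)$ depends only on $y_2$. In terms of Hermitian forms this says $V$ and $H$ give the diagonal matrices $M_V=\mathrm{diag}(1,0)$ and $M_H=\mathrm{diag}(0,1)$. This already explains the shape of $\psi$: the variables $x_2$ and $y_1$ cannot enter, and the data to be inverted is governed solely by $x_1$ and $y_2$.

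The substantive computation is $\phi_{\Delta+\Gamma}$. Here I would use that $\Delta$ and $\Gamma$ are the graphs of the endomorphisms $1$ and $\zeta$ of $T$, i.e. $\Delta=d_1^{-1}(o)$ and $\Gamma=d_\zeta^{-1}(o)$ for the difference maps $d_\alpha(t_1,t_2)=t_2-\alpha t_1$. Pulling back the principal polarization $(o)$ of $T$ along $d_\alpha$ gives the Hermitian matrices
\[
M_\Delta=\begin{pmatrix}1&-1\\-1&1\end{pmatrix},\qquad
M_\Gamma=\begin{pmatrix}1&-\zeta\\-\bar\zeta&1\end{pmatrix},
\]
where the off-diagonal $\bar\zeta$ is forced by hermiticity (equivalently, it is the Rosati adjoint of $\zeta$ for this polarization). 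Adding, $M_{\Delta+\Gamma}=\begin{pmatrix}2&-(1+\zeta)\\-(1+\bar\zeta)&2\end{pmatrix}$, and comparing the associated functionals turns the defining equation into the linear system
\[
2z_1-(1+\bar\zeta)z_2=-x_1,\qquad -(1+\zeta)z_1+2z_2=-y_2 .
\]

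Finally I would solve this system. The key arithmetic fact is $(1+\zeta)(1+\bar\zeta)=1+(\zeta+\bar\zeta)+\zeta\bar\zeta=3$, using $\zeta+\bar\zeta=1$ and $\zeta\bar\zeta=1$; hence the determinant is $4-3=1$. This reconfirms that $\phi_{\Delta+\Gamma}$ is an isomorphism, consistent with the principal polarization, and makes the inversion clean: Cramer's rule yields exactly $z_1=-2x_1-(1+\bar\zeta)y_2$ and $z_2=-(1+\zeta)x_1-2y_2$, which is the asserted formula. The main obstacle is the bookkeeping of signs and normalisations — the identification $\Pic^0(A)\cong A$, the sign in $\phi_{(o)}$, and the conjugation $\zeta\mapsto\bar\zeta$ in the Rosati involution — since a slip there flips a sign or interchanges $\zeta$ and $\bar\zeta$. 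The determinant-$1$ check, together with the requirement that $\psi$ come out $\C$-linear (rather than conjugate-linear) in $x_1,y_2$, serve as consistency tests that pin the conventions down.
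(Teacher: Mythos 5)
Your proposal is correct and takes essentially the same approach as the paper: both reduce, via Lemma \ref{lemma:matching-translation}, to solving $\phi_V(x)+\phi_H(y)+\phi_{\Delta+\Gamma}(z)=0$ after identifying $\Pic^0(A)$ with $A$ through the product principal polarization, both compute the same matrices $\phi_V=\mathrm{diag}(1,0)$, $\phi_H=\mathrm{diag}(0,1)$ and $\phi_{\Delta+\Gamma}=\smallmatr{2 & -1-\bar\zeta \\ -1-\zeta & 2}$, and both invert the resulting determinant-one system. The only difference is bookkeeping: you obtain $\phi_\Delta$ and $\phi_\Gamma$ as Hermitian forms pulled back along the difference maps $d_\alpha\colon A\to T$, whereas the paper realizes $\Delta=g\inverse(H)$, $\Gamma=h\inverse(H)$ for automorphisms $g,h$ of $A$ and applies $\phi_{g^*H}=\hat g\,\phi_H\,g$ --- the same computation in dual-map language.
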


\begin{remark}
   For the special case where $x$ and $y$ are among the
   nine fixed points of~$\sigma$,
   we get with a calculation
   \be
      \psi(p_{ij}, p_{kl})=p_{il}
   \ee
   In other words, we have
   \be
      t^*_{p_{ij}}V+t^*_{p_{kl}}H+t^*_{p_{il}}(\Delta+\Gamma)
      \linequiv V+H+\Delta+\Gamma
   \ee
\end{remark}

\begin{proof}[Proof of the proposition]
   For a line bundle $M$ on $A$ denote as usual by $\phi_M$ the
   canonical homomorphism $A\to\Pic^0(A)$, $x\mapsto t^*_xM-M$.
   The point $z=\psi(x,y)$ is characterized by the condition
   $t^*_xV+t^*_yH+t^*_z(\Delta+\Gamma)\linequiv V+H+\Delta+\Gamma$,
   which is equivalent to
   $\phi_V(x)+\phi_H(y)+\phi_{\Delta+\Gamma}(z)=0$.
   This in turn implies that
   \begin{equation}\label{eqn:canonical-homs}
      \psi(x,y)=\phi_{\Delta+\Gamma}\inverse
         \Big(-\phi_V(x)-\phi_H(y)\Big)
   \end{equation}
   The issue therefore is to explicitly determine the
   canonical maps.
   As $V+H$ gives a principal polarization,
   $\phi_{V+H}$ is an isomorphism, and hence we can use its
   inverse to identify $\Pic^0(A)$ with $A$.
   In that sense, we will, by slight abuse of notation,
   denote the composed homomorphism
   $T\times T=A\tol^{\phi_M}\Pic^0(A)\tol^{\phi_{V+H}\inverse} A=T\times T$
   again by $\phi_M$.
   In this setup, $\phi_V$ and $\phi_H$ are given by the matrices
   \be
      \smallmatr{1 & 0 \\
            0 & 0}
      \quad\mbox{and}\quad
      \smallmatr{0 & 0 \\
            0 & 1}
   \ee
   respectively.
   We now determine the map $\phi_{\Delta+\Gamma}$ in these terms.
   Consider to this end
   the isomorphism $g:T\times T\to T\times T$, $(x,y)\mapsto (x,y-x)$.
   The analytic representation of $g$ and its dual map $\hat g$
   are
   \be
      \smallmatr{1  & 0 \\
            -1 & 1}
      \quad\mbox{and}\quad
      \smallmatr{1 & -1 \\
            0 & 1}
   \ee
   We have $g\inverse(H)=\Delta$, thus
   \be
      \phi_\Delta=\hat g\phi_H g
      =\smallmatr{1  & -1 \\
             -1 & 1}
   \ee
   We can proceed in the analogous way for $\phi_\Gamma$
   using the isomorphism $h:T\times T\to T\times T$,
   $(x,y)\mapsto(x,y-\zeta x)$.
   The analytic representations of $h$ and $\hat h$ are
   \be
      \smallmatr{1 & 0 \\
            -\zeta & 1}
      \quad\mbox{and}\quad
      \smallmatr{1 & -\overline\zeta \\
            0 & 1}
   \ee
   Since $h\inverse(H)=\Gamma$, we get
   \be
      \phi_\Gamma=\hat h\phi_H h
      =\smallmatr{1      & -\overline\zeta \\
             -\zeta & 1}
   \ee
   In conclusion we find
   \be
      \phi_{\Delta+\Gamma}
      =\smallmatr{2        & -1-\overline\zeta \\
             -1-\zeta & 2}
   \ee
   The assertion follows now from
   \eqnref{eqn:canonical-homs} using
   the matrices we just found.
\end{proof}

\begin{lemma}\label{lemma:curve-N}
   The divisor $H+\Delta+\Gamma-V$
   is numerically equivalent to an elliptic curve $N$.
   We have
   \be
      \phi_N
      =\matr{1        & -1-\overline\zeta \\
             -1-\zeta & 3}
   \ee
\end{lemma}

\begin{proof}
   The line bundle $H+\Delta+\Gamma-V$ has self-intersection 0
   and it has positive intersection with the ample bundle
   $\Delta+\Gamma$.
   This implies that
   its numerical class belongs to
   a sum of numerically equivalent elliptic curves.
   As its intersection with $H$ is 1, it is in fact the class of
   a single elliptic curve.
   The second assertion follows from the
   equation
   \be
      \phi_N=\phi_H+\phi_\Delta+\phi_\Gamma-\phi_V
   \ee
   upon using the
   explicit matrix representations of the maps that were
   worked out above.
\end{proof}

   The following statement can be useful in understanding the map
   $B\to\P^2$, or in the construction of a basis of $H^0(B,L)$.

\begin{lemma}\label{lemma:Fix-in-K}
   Consider the line bundle $M=V+H+\Delta+\Gamma$ on $A$.
   All nine fixed points of the automorphism
   $\sigma=(\zeta^2,\zeta^2)$ are
   contained in the kernel $K(M)$ of $\phi_M$. In other words,
   if $D\in|M|$, then
   \be
      t_x^*D \in |M| \qquad\mbox{for every $x$ in $\Fix(\sigma)$}
   \ee
\end{lemma}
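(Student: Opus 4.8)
The plan is to reduce the statement to the explicit matrix description of $\phi_M$ already produced in the preceding computations. Recall that by definition $K(M)=\ker\phi_M$, where $\phi_M\colon A\to\Pic^0(A)$ is the canonical homomorphism $x\mapsto t_x^*M-M$; and that $x\in K(M)$ is equivalent to $t_x^*M\linequiv M$, whence $t_x^*D\in|M|$ for every $D\in|M|$ (the divisor $t_x^*D$ is effective and its class equals $t_x^*M\linequiv M$). This disposes of the ``in other words'' reformulation once the kernel membership is established. Since $\phi$ is additive in its line-bundle argument, $\phi_M=\phi_V+\phi_H+\phi_\Delta+\phi_\Gamma$, and under the identification of $\Pic^0(A)$ with $A$ through the principal polarization $\phi_{V+H}$ used earlier, all four summands were written down explicitly (equivalently, $\phi_{V+H}=\mathrm{id}$ plus the displayed $\phi_{\Delta+\Gamma}$). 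Adding them yields
\[
   \phi_M=\matr{3 & -1-\bar\zeta \\ -1-\zeta & 3},
\]
a self-map of $A=T\times T$ with entries in $\Z[\zeta]=\mathrm{End}(T)$. Thus the whole question becomes whether this matrix annihilates each $p_{ij}$.

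Next I would record that the fixed points $p_{ij}=(p_i,p_j)$ are $3$-torsion on $A$: indeed $p_i=\tfrac i3(1+\zeta)$ satisfies $3p_i=i(1+\zeta)\in\Z[\zeta]$, so $3p_i=0$ in $T=\C/\Z[\zeta]$. Applying the matrix and using $3p_i=3p_j=0$ collapses the image to $\big(-(1+\bar\zeta)p_j,\ -(1+\zeta)p_i\big)$, so it remains only to see that both entries vanish in $T$. Here two elementary identities in $\Z[\zeta]$ do all the work, namely
\[
   (1+\zeta)^2=3\zeta, \qquad (1+\zeta)(1+\bar\zeta)=3,
\]
the first coming from $\zeta^2=\zeta-1$ and the second being the norm $N(1+\zeta)=3$. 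They give $(1+\zeta)p_i=\tfrac i3(1+\zeta)^2=i\zeta\in\Z[\zeta]$ and $(1+\bar\zeta)p_j=\tfrac j3(1+\bar\zeta)(1+\zeta)=j\in\Z$, both $\equiv 0$ in $T$. Hence $\phi_M(p_{ij})=0$, i.e. $p_{ij}\in K(M)$, for all $0\le i,j\le 2$.

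The only delicate points are bookkeeping ones, and I expect them to be the main source of error rather than of difficulty: I must use exactly the identification of $\Pic^0(A)$ with $A$ under which the earlier matrices were written, so that $\ker\phi_M$ computed as a self-map of $A$ genuinely coincides with $K(M)$, and I must keep $\zeta$ and $\bar\zeta$ in the correct off-diagonal slots, consistent with the formula previously obtained for $\psi$. Finally, I would flag why a pure symmetry argument does not suffice: although $M$ is $\sigma$-invariant, forcing $\phi_M(p_{ij})$ to be a $\hat\sigma$-fixed $3$-torsion class, the $\hat\sigma$-fixed locus on $\Pic^0(A)$ is itself a group of order $9$ consisting entirely of $3$-torsion, so invariance alone cannot pin the class down to $0$. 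It is precisely the two arithmetic identities above that close the argument.
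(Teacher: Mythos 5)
Your proof is correct and follows essentially the same route as the paper's: both reduce to the explicit matrix $\phi_M=\phi_V+\phi_H+\phi_\Delta+\phi_\Gamma=\smallmatr{3 & -1-\overline\zeta \\ -1-\zeta & 3}$ and check that it sends each $p_{ij}$ into the lattice $\Z[\zeta]\times\Z[\zeta]$. The only difference is that where the paper says ``one checks,'' you carry out the verification explicitly via the identities $(1+\zeta)^2=3\zeta$ and $(1+\zeta)(1+\overline\zeta)=3$, which is a welcome but not substantively different elaboration.
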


\begin{proof}
   From the equation
   $\phi_M=\phi_V+\phi_H+\phi_\Delta+\phi_\Gamma$ we get
   \be
      \phi_M=\matr{3        & -1-\overline\zeta \\
                   -1-\zeta & 3}
   \ee
   and one checks that $\phi_M\cdot p_{ij}$ is contained in
   $(\Z+\Z\zeta)\times(\Z+\Z\zeta)$ for every $i$ and $j$.
\end{proof}

\paragraph{Preimages of lines.}
   As we know, the three translates $V_i=V+p_{i0}$ map to points
   in $\P^2$.
   We would like to see the curves on $A$ which correspond to
   the lines $\ell_{ij}$
   through any two of those points.
   As the preimage of $\ell_{ij}$ contains $V_i$ and $V_j$,
   we have $V+H+\Delta+\Gamma=V_i+V_j+N_{ij}$, where
   the residual curve $N_{ij}$ is an elliptic curve
   (by Lemma \ref{lemma:curve-N}). Its intersection numbers with the
   generators are
   \be
      N_{ij}\cdot V=3, \
      N_{ij}\cdot H=1, \
      N_{ij}\cdot \Delta=1, \
      N_{ij}\cdot \Gamma=1
   \ee
   On the other hand,
   every elliptic curve on $A$ that passes through the origin
   arises as the image of a homomorphism
   $T\to A$, $x\mapsto(ax+b\zeta x, cx+d\zeta x)$
   for suitable integers $a,b,c,d$
   (see \cite{HN:Existence}).
   Using the method from \cite[Sect.~4.2]{BS:self-product}
   one finds that the elliptic curve $N$ corresponding to
   $(a,b,c,d)=(1,1,0,1)$, i.e, the image of the map
   $x\mapsto (x+\zeta x, \zeta x)$
   has the same intersection numbers as $N_{ij}$ and
   is therefore numerically equivalent to $N_{ij}$.
   So $N_{ij}$ can be obtained from $N$ by a translation --
   and
   we determine now explicitly such a translation.
   The idea is this: We know that the divisor $V_i+V_j+N_{ij}$
   passes through all 9 points $p_{ij}$. Since $V_i$ and $V_j$
   cover 6 of them, $N_{ij}$ must pass through the remaining 3.
   Now,
   a computation shows that the intersection points of $N$ and
   $V_0$
   are $p_{00}, p_{01}, p_{02}$. This implies that
   $N=N_{12}$. The other cases are obtained via translation by
   suitable fixed points -- altogether we have
   \be
      N_{12} & = & N \\
      N_{02} & = & N+p_{10}=t^*_{-p_{10}}N=t^*_{p_{20}}N \\
      N_{01} & = & N+p_{20}=t^*_{-p_{20}}N=t^*_{p_{10}}N
   \ee

\paragraph{The images of the contracted translates.}
   We know that the 12 translates of $V,H,\Delta,\Gamma$
   by fixed-points of $\sigma$ map to points in $\P^2$. We will use the
   notation
   \be
      V_i=V+p_{i0}, \quad
      H_i=H+p_{0i}, \quad
      \Delta_i=\Delta+p_{i,2i}, \quad
      \Gamma_i=\Gamma+p_{ii}
   \ee
   for these translates (where $0\le i\le 2$) and we will
   determine the coordinates of their image points.
   Consider
   in the linear series $|L|$
   the divisors
   \be
      D_0 := V_1+V_2+N_{12} \\
      D_1 := V_0+V_2+N_{02} \\
      D_2 := V_0+V_1+N_{01}
   \ee
   We choose sections $s_i\in H^0(A,M)$ defining them
   and use these to
   define
   the map $\phi_L:B\to\P^2$.
   Clearly the vertical curves
   $V_0,V_1,V_2$ then map to the points
   \begin{equation}\label{eqn:V}
      v_0=(1:0:0),\quad
      v_1=(0:1:0),\quad
      v_\infty=(0:0:1)
   \end{equation}
   respectively.
   We will now use the projective representation
   $K(M)\to \PGL(H^0(A,M))$
   (see~\cite[Chap.~6]{BL:CAV})
   in order to determine the coordinates of the images of
   the remaining nine curves $H_i,\Delta_i,\Gamma_i$.
   By Lemma~\ref{lemma:Fix-in-K} we have
   $\Fix(\zeta^2)\subset K(M)$,
   and we know that translation by fixed points
   leaves the condition of vanishing in these points invariant.
   Therefore
   the representation restricts to
   $\Fix(\zeta^2)\to\PGL(H^0(B,L))$.
   In other words,
   each of the nine fixed points gives rise to a projective
   transformation of $\P^2$.
   Let $M_{ij}$ denote the projective transformation corresponding
   to $p_{ij}$. As the translation
   $t_{p_{10}}$ cycles the vertical translates,
   $V_0\to V_1\to V_2\to V_0$,
   we know that
   $M_{10}$ must be of the form
   \be
      M_{10}=\smallmatr{0    & 0         & \lambda_3 \\
                   \lambda_1 & 0         & 0 \\
                   0         & \lambda_2 & 0
                   }
   \ee
   with non-zero entries $\lambda_i$.
   Note now that scaling the sections $s_i$ corresponds
   to a diagonal transformation on $\P^2$. We can therefore
   scale the $s_i$
   (which leaves the coordinates in~\eqnref{eqn:V} invariant)
   in such a way that in fact
   $\lambda_1=\lambda_2=\lambda_3=1$, so that
   \be
      M_{10}=\smallmatr{0 & 0 & 1 \\
                   1      & 0 & 0 \\
                   0      & 1 & 0
                   }
   \ee
   The key is now
   the fact that
   the horizontal curves $H_0,H_1,H_2$ are fixed under $p_{10}$.
   Their coordinate vectors must therefore be eigenvectors of
   $M_{10}$. These are
   \begin{equation}\label{eqn:H}
      h_0=(1:1:1),\quad
      h_1=(1:\eps:\eps^2),\quad
      h_\infty=(1:\eps^2:\eps)
   \end{equation}
   where $\eps$ denotes a primitive third root of unity.
   (After possibly rechoosing the origin in $A$ they
   are in this order.)

   Consider now the translation $t_{p_{01}}$. It fixes
   $V_0,V_1,V_2$ and therefore $M_{01}$ is of the form
   \be
      M_{01}=\smallmatr{ \mu_1 & 0     & 0 \\
                             0 & \mu_2 & 0 \\
                             0 & 0     & \mu_3
                   }
   \ee
   with non-zero entries $\mu_i$.
   Since $t_{p_{01}}$ maps $H_0$ to $H_1$, we have in fact
   \be
      M_{01}=\smallmatr{ 1 & 0    & 0\\
                         0 & \eps & 0 \\
                         0 & 0    & \eps^2
                   }
   \ee
   after scaling $M_{01}$ if necessary.
   (Here we use that the images of the $H_i$ are given by
   the coordinates, and the order, in \eqnref{eqn:H}.)
   With this information at hand,
   we can now also determine the coordinates of the images of
   the $\Delta_i$ and $\Gamma_i$.
   First, the diagonal translates $\Delta_i$ are fixed under
   $t_{p_{11}}$, and therefore
   their
   images are given by the eigenvectors of the matrix
   \be
      M_{11}=M_{10}\cdot M_{01}
      =\smallmatr{ 0 & 0      & 1 \\
                \eps & 0      & 0 \\
                   0 & \eps^2 & 0
             }
   \ee
   Thus we get the points
   \begin{equation}\label{eqn:D}
      \delta_0=(\eps:1:1),\quad
      \delta_1=(1:\eps:1),\quad
      \delta_\infty=(1:1:\eps)
   \end{equation}
   Here the first of these points is the image of $\Delta_0$, because
   it is this point among the three which is
   collinear with the images of $V_0$ and $H_0$.
   And finally, the graph translates $\Gamma_i$ are fixed under
   $t_{p_{12}}$, which leads us to the matrix
   \be
      M_{12}=M_{01}^2\cdot M_{10}
      =\smallmatr{ 0      & 0    & 1 \\
                   \eps^2 & 0    & 0 \\
                   0      & \eps & 0
             }
   \ee
   and the coordinates
   \begin{equation}\label{eqn:G}
      \gamma_0=(\eps^2:1:1),\quad
      \gamma_1=(1:\eps^2:1),\quad
      \gamma_\infty=(1:1:\eps^2)
   \end{equation}
   The first of these points
   is the image of $\Gamma_0$ (again by collinearity with $V_0$ and $H_0$).
   Summing up, we found that
   in the chosen basis of $H^0(L)$ the
   image points of the 12 translates
   are given by
   \eqnref{eqn:V}, \eqnref{eqn:H}, \eqnref{eqn:D} and
   \eqnref{eqn:G},
   and these
   coincide with the
   points in the dual Hesse configuration in standard form.


\paragraph{The Roulleau-Urz\'ua configuration.}
   Let $n=3m$ for some integer $m\geq 1$. Using the group of $n$-torsion points on $A$,
   we can translate the curves $V, H, \Delta$ and $\Gamma$ to obtain an a priori count of
   $4n^4$ curves, 4 each at each of the $n^4$ $n$-torsion points.
   Since each of the divisors $V, H, \Delta$ and $\Gamma$ contain $n^2$ of the torsion points,
   and thus are their own images under translation by this subgroup,
   there are only actually $4n^4/n^2=4n^2$ curves.
   The images under $\varphi_L:B\to\P^2$
   of the proper transforms under $B\to A$ of these curves form the
   Roulleau-Urz\'ua configuration. We can now prove that these are exactly
   the Halphen cubics of order $m$.

\begin{proof}[Proof of Theorem \ref{H=RU}]
The $n$-torsion subgroup contains the order 9 subgroup consisting of the
   9 points fixed with respect to the action of $\sigma$ on $A$. Each of the 12 curves through
   these 9 points (these are the curves shown in Figure \ref{FigureOnA}) map to points of $\P^2$,
   and we found these points above. The orbits under applications of $\sigma$
   among the remaining $4(n^2-3)$ curves consist of 3 curves each. Thus
   under $\varphi_L$ these curves map 3 to 1 to cubic curves, and the
   images of the $4(n^2-3)$ curves in $A$
   are $4(n^2-3)/3=4(3m^2-1)$ cubic curves in $\P^2$.     

   The curves in the pencils $(H_u), (\Delta_u)$ and $(\Gamma_u)$ meet
   $V_0, V_1, V_2$, so their images pass through the points $v_0, v_1, v_\infty$.
   Similarly, the curves in the pencils $(V_u), (\Delta_u)$ and $(\Gamma_u)$ meet
   $H_0, H_1, H_2$, so their images pass through the points $h_0, h_1, h_\infty$,
   and so on. All together, the images of curves in $(V_u)$ pass through all
   9 points in $\Lambda_v$, so they are members of $\CC_v$, and similarly
   the pencils $(H_u)$, $(\Delta_u)$ and $(\Gamma_u)$ map to the pencils
   $\CC_h, \CC_\delta$ and $\CC_\gamma$.

   Note that $n^2-3$ curves on $A$ come from each of the 4 pencils.
   The ones which meet $V$ come from the pencils
   $(H_u), (\Delta_u)$ and $(\Gamma_u)$, with one from each pencil meeting
   $V$ at each of the $n^2-3$ $n$-torsion points on $V$ not fixed by $\sigma$.
   Now, $V$ maps to the point $v_0\in\P^2$, and the triples of curves at each torsion point
   of $V$ thus map to curves with the same tangent direction at $v_0=(1:0:0)$.
   The tangent directions are the $(n^2-3)/3$ infinitely near images in $X$ of the
    $n$-torsion points on $V$ not fixed by $\sigma$. We also get 3 tangent directions
    corresponding to the infinitely near images of the 3 $n$-torsion points on $V$ fixed
    by $\sigma$; but these we know to be the tangent directions of
    the 3 lines of the dual Hesse configuration that pass through $v_0$,
    which are the images of the 3 exceptional divisors $E_{0i}$
    above the fixed points on $V$. So the restriction of the quotient map
    $B \rightarrow X$ to $V$ is a degree 3 morphism
    $V\rightarrow E_{v_0}$ triply ramified above the directions of the dual Hesse lines,
    and therefore the tangent directions to the Roulleau-Urzúa cubics in
    $\CC_h, \CC_\delta$ and $\CC_\gamma$ are exactly the points in
    $E_{v_0}[3m]$. So, they are indeed the Halphen cubics of order $m$
    in the pencils $\CC_h, \CC_\delta$ and $\CC_\gamma$.

    The same argument applied to the restriction of $B \rightarrow X$ to
    $H$ proves that the Roulleau-Urz\'ua cubics in the remaining pencil
    $\CC_v$ are the Halphen cubics as well.
\end{proof}




\address{Thomas Bauer,
    Fach\-be\-reich Ma\-the\-ma\-tik und In\-for\-ma\-tik,
   Philipps-Uni\-ver\-si\-t\"at Mar\-burg,
   Hans-Meer\-wein-Stra{\ss}e,
   D-35032~Mar\-burg, Germany}
\email{tbauer@mathematik.uni-marburg.de}

\address{Brian Harbourne\\
Department of Mathematics\\
University of Nebraska\\
Lincoln, NE 68588-0130 USA}
\email{bharbourne1@unl.edu}

\address{Joaquim Ro\'e\\
Departament de Matem\`atiques\\
Universitat Aut\`onoma de Barcelona\\
08193 Bellaterra (Barcelona)\\
Spain}
\email{jroe@mat.uab.cat}

\address{Tomasz Szemberg\\
   Department of Mathematics\\
   Pedagogical University of Cracow\\
   Podchor\c a\.zych 2\\
   PL-30-084 Krak\'ow, Poland
\par
   Current Address:
   Polish Academy of Sciences\\
   Institute of Mathematics\\
   \'Sniadeckich 8\\
   PL-00-656 Warszawa, Poland
}
\email{tomasz.szemberg@gmail.com}


\end{document}